\newcommand{\bd}{\begin{description}}
\newcommand{\ed}{\end{description}}
\newcommand{\bi}{\begin{itemize}}
\newcommand{\ei}{\end{itemize}}
\newcommand{\be}{\begin{enumerate}}
\newcommand{\ee}{\end{enumerate}}
\newcommand{\beq}{\begin{equation}}
\newcommand{\eeq}{\end{equation}}
\newcommand{\beqs}{\begin{eqnarray*}}
\newcommand{\eeqs}{\end{eqnarray*}}
\definecolor{DarkGreen}{rgb}{0.2, 0.6, 0.3}
\newtheorem{theorem}{Theorem}[section]
\newtheorem{lemma}{Lemma}[section]
\newtheorem{definition}{Definition}[section]
\newtheorem{corollary}[theorem]{Corollary}
\newtheorem{case}{Case}
\newtheorem{subcase}{Subcase}[case]
\newtheorem{claim}{Claim}
\newtheorem{fact}{Fact}
\newtheorem{proposition}{Proposition}[section]
\newtheorem{problem}{Problem}
\providecommand{\prec}{\preceq}               
\begin{document}
\title{\textbf{On the size edge-ordered Ramsey numbers of graphs}
\footnote{Supported by the National Science Foundation of China
(Nos. 12471329 and 12061059).}
}
\author{Yanyan Song\footnote{School of Mathematics and Statistis, Qinghai Normal University, Xining, Qinghai 810008, China. {\tt songyanyan@hrbeu.edu.cn}}, \ \ Yaping Mao \footnote{Academy of Plateau
Science and Sustainability, and School of Mathematics and Statistics, Qinghai Normal University, Xining, Qinghai 810008, China. {\tt yapingmao@outlook.com; myp@qhnu.edu.cn}}
}

\date{}
\maketitle
\begin{abstract}
For edge-ordered graphs $G^{\prec}$ and $H^{\prec}$, the size edge-ordered Ramsey number $\hat{r}_{\text{edge}}(G^{\prec}, H^{\prec})$ is defined as the smallest integer $m$ for which there exists an edge-ordered graph $F^{\prec}$ (with underlying graph $F$) having $m$ edges, such that every $2$-coloring of the edges of $F^{\prec}$ contains a monochromatic edge-ordered subgraph isomorphic to $G^{\prec}$ or a monochromatic edge-ordered subgraph isomorphic to $H^{\prec}$.
Fox and Li posed a foundational question: which families of edge-ordered graphs have linear or near-linear size edge-ordered Ramsey numbers?
In this paper, we apply Szemerédi's regularity lemma to prove that, even for sparse graph families, specifically the well-defined class of edge-ordered book graphs, the size edge-ordered Ramsey numbers of this family exhibit non-linear growth.
Furthermore, we show that three families of edge-ordered graphs exhibit linear or near-linear size edge-ordered Ramsey numbers.\\[2mm]
{\bf Keywords:} Coloring; Edge-ordered graph; Size edge-ordered Ramsey number; Size Ramsey number  \\[2mm]
{\bf AMS subject classification 2020:} 05C55; 05C75; 05C38; 05C99
\end{abstract}

\section{Introduction}
\indent
\par
Throughout this paper, all graphs are presumed to be undirected, finite, and simple, any undefined terms or notations conform to the established conventions in \cite{BM08}. Let $G = (V(G), E(G))$ denote a graph comprising vertex set $V(G)$ and edge set $E(G)$. We employ $K_n$, $P_n$, and $C_n$ to denote the complete graph, path, and cycle (in that order) on $n$ vertices, with $nK_2$ representing an $n$-edge matching. The \emph{book graph} $B_{m,n}$ is defined as the graph formed by taking $n$ copies of $K_{m+1}$ and identifying them along a common complete subgraph $K_m$. We refer to this $K_m$ as the \textit{spine}, while the vertices belonging to the independent set are termed the \textit{pages}.

Ramsey theory explores the unavoidable emergence of organized substructures within large colored structures, where Ramsey numbers act as a core tool for capturing this inherent property \cite{GRS90}. The fundamental Ramsey number identifies the minimal graph size ensuring the existence of a monochromatic subgraph.

\begin{definition}
The \textit{Ramsey number} $r(G,H)$ is the smallest $N$ such that any $2$-coloring of the edges of a complete graph on $N$ vertices, there exists a red copy of $G$ or a blue copy of $H$.
\end{definition}

Classical Ramsey numbers focus on unordered graphs, but recent work has extended this framework to edge-ordered graphs where edges carry a total order-introducing new questions about how edge order impacts Ramsey-type phenomena.

An \textit{edge-ordered graph} $G^{\prec}$ is a pair $(G, \prec)$ consisting of a graph $G = (V(G), E(G))$ and a linear ordering $\prec$ defined on the edge set $E(G)$. An edge-ordered graph $G^{\prec}$  as an \textit{edge-ordered subgraph} of an edge-ordered graph $H^{\prec}$ if $G$ is a subgraph of $H$ and the edge-ordering of $G$ constitutes a suborder of $H$’s edge-ordering. 
Two edge-ordered graphs $G^{\prec_1}$ and $H^{\prec_2}$ are \emph{isomorphic} if there exists a graph isomorphism 
$\varphi \colon G \to H$ whose induced edge-map $\varphi_E \colon E(G) \to E(H)$ satisfies
$e_1 \prec_1 e_2$ if and only if $\varphi_E(e_1) \prec_2 \varphi_E(e_2)$
for all $e_1,e_2 \in E(G)$.
For a vertex $v \in V(G^{\prec})$, $d_{G^{\prec}}(v)$ stands for the degree of $v$ in $G$, and $\delta(G^{\prec})$ and $\Delta(G^{\prec})$ denote the minimum degree and maximum degree of $G^{\prec}$, respectively. The graph $G^{\prec}-H^{\prec}$ is obtained by eliminating all vertices of $H^{\prec}$ from $G^{\prec}$. If $H^{\prec}=\{v\}$, we simply write $G^{\prec}-v$. For any two edge-ordered graphs $G^{\prec}$ and $H^{\prec}$, their union $G^{\prec} \cup H^{\prec}$ possesses vertex set $V(G^{\prec}) \cup V(H^{\prec})$ and edge set $E(G^{\prec}) \cup E(H^{\prec})$.

Balko and Vizer \cite{BV20} extended Ramsey numbers to this setting, defining edge-ordered Ramsey numbers to capture forced monochromatic edge-ordered subgraphs.

\begin{definition}
The \emph{edge-ordered Ramsey number} $r_{\text {edge }}(G^{\prec},H^{\prec})$, is defined as the smallest integer $N$ such that there exists an edge-ordered complete graph $K_N^{\prec}$ of $K_N$ such that any $2$-coloring of the edges of $K_N^{\prec}$ contains a red edge-ordered copy of $G^{\prec}$ or a blue edge-ordered copy of $H^{\prec}$.
\end{definition}

A trivial but critical observation is as follows:
\begin{equation}\label{e-1}
 r(G,H) \leq r_{\text{edge}}(G^{\prec},H^{\prec})
 \end{equation}
for any edge-ordered graphs $G^{\prec}$ and $H^{\prec}$, since the edge-ordered Ramsey number must account for both the underlying graph structure and its edge order. For edge-ordered complete graphs $K_m^{\prec}, K_n^{\prec}$ with lexicographic edge order, this inequality becomes an equality: $r_{\text{edge}}(K_m^{\prec}, K_n^{\prec}) = r(K_m, K_n)$. This is because the lexicographic order of $K_{r(K_m,K_n)}^{\prec}$ forces a monochromatic edge-ordered complete subgraph whenever it forces a monochromatic $K_m^{\prec}$ or a monochromatic $K_n^{\prec}$.

To the best of our knowledge, Ramsey numbers associated with edge-ordered graphs had not been explored in the academic literature prior to \cite{BV19}. In contrast, Ramsey numbers for graphs equipped with ordered vertex sets have undergone substantial investigation in recent years (see \cite{BCKK15, BCKK20, BJV19, BP23, BV22, CP02, CFLS17, Ro19}).

However, the general existence of edge-ordered Ramsey numbers is not immediately evident. Balko and Vizer \cite{BV20} proved that such numbers do indeed exist; they also showed that $r_{\text{edge}}(G^{\prec}) \leq 2^{O(n^3 \log n)}$ holds for any bipartite edge-ordered graph $G^{\prec}$ on $n$ vertices. Furthermore, they defined a natural class of edge orderings, termed lexicographic edge-orderings, for which we can establish significantly sharper upper bounds on the corresponding edge-ordered Ramsey numbers.

The \emph{edge-ordered Ramsey number} $r_{\text{edge}}(G^{\prec}; q)$ is defined as the smallest integer $N$ such that there exists an edge-ordered complete graph $K_N^{\prec}$ where any $q$-coloring of the edges of $K_N^{\prec}$ contains a monochromatic edge-ordered copy of $G^{\prec}$.
Fox and Li \cite{FL20} proved that there exists a constant $c$ such that $r_{\text{edge}}(H^{\prec}; q) \leq 2^{c q^n n^{2q-2} \log^q n}$ for any edge-ordered graph $H^{\prec}$ with $n$ vertices. Moreover, they established a polynomial bound on the edge-ordered Ramsey number of graphs with bounded degeneracy. Lastly, they extended this result to graphs where each edge carries a label (with no inherent ordering imposed on the labels).

Erd\H{o}s et al. \cite{EFRS78} defined the size-Ramsey number as follows.

\begin{definition}\label{def-er}
For graphs $G$ and $H$, the size-Ramsey number $\hat{r}(G,H)$ is the smallest number $m$ such that there exists a graph $F$ on $m$ edges such that any $2$-coloring of the edges of $F$, there exists either a red copy of $G$ or a blue copy of $H$.    
\end{definition}

If $G=H$, then we write $\hat{r}(G)$ for short, instead of $\hat{r}(G,G)$. 

The concept of size Ramsey numbers enables more precise investigation of the minimality of the target graph $F$. One can always select a sufficiently large complete graph to serve as the target graph $F$, so $\binom{r(G,H)}{2}$ forms a trivial upper bound for $\hat{r}(G,H)$. This bound is also tight when $G,H$ are both complete graphs \cite{EFRS78}, but for other graphs $G,H$, the optimal target graph is typically far sparser. For additional recent findings regarding the size-Ramsey numbers of graphs, see \cite{Be83, BKMMMMP21, CJKMMRR19, CFW23, HJKMR20, HKTL95, KLWY21, KRSS11, RS00, Ti24}.

Fox and Li \cite{FL20} extended this to edge-ordered graphs, defining the size edge-ordered Ramsey number to capture the minimum number of edges needed to force a monochromatic edge-ordered subgraph.

\begin{definition}\label{def-seorn}
For edge-ordered graphs $G^{\prec}$ and $H^{\prec}$, the \textit{size edge-ordered Ramsey number} $\hat{r}_{\text {edge }}(G^{\prec},H^{\prec})$ is defined as the minimum number $m$ such that there exists an edge-ordered graph $F^{\prec}$ of $F$ with $m$ edges such that any $2$-coloring of the edges of $F^{\prec}$ has a red edge-ordered copy of $G^{\prec}$ or a blue edge-ordered copy of $H^{\prec}$.
\end{definition}

If $G^{\prec}=H^{\prec}$, then we write $\hat{r}_{\text {edge }}(G^{\prec})$ for short, instead of $\hat{r}_{\text {edge }}(G^{\prec},G^{\prec})$.

A trivial yet critical observation is that $\hat{r}(G,H) \leq \hat{r}_{\text{edge}}(G^{\prec},H^{\prec})$ holds for any edge-ordered graphs $G^{\prec}, H^{\prec}$. This is because the size edge-ordered Ramsey number needs to consider both the underlying graph structure and its edge order. Similarly, for edge-ordered graphs $G^{\prec}$ and $H^{\prec}$, we also have
\begin{equation}\label{e1}
\hat{r}(G,H)  \leq \hat{r}_{\text{edge}}(G^{\prec},H^{\prec}) \leq\binom{r_{\text{edge}}(G^{\prec},H^{\prec})}{2}.
\end{equation}

\begin{proposition}\label{pro-1}
If \( G^{\prec} \) is an edge-ordered subgraph of \( H^{\prec} \), then
\[
\hat{r}_{\text{edge}}(G^{\prec}) \leq \hat{r}_{\text{edge}}(H^{\prec}).
\]    
\end{proposition}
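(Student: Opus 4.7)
The plan is a direct monotonicity argument: I will take an extremal edge-ordered host graph witnessing the value of $\hat{r}_{\text{edge}}(H^{\prec})$ and show that the very same host also forces a monochromatic edge-ordered copy of $G^{\prec}$, giving the desired inequality at no extra cost.

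More concretely, set $m = \hat{r}_{\text{edge}}(H^{\prec})$. By Definition \ref{def-seorn} there exists an edge-ordered graph $F^{\prec}$ on exactly $m$ edges such that every $2$-coloring of $E(F^{\prec})$ contains a monochromatic edge-ordered copy of $H^{\prec}$. I will then fix an arbitrary $2$-coloring $c \colon E(F^{\prec}) \to \{\text{red}, \text{blue}\}$ and extract such a monochromatic $H^{\prec}$, say in color red without loss of generality.

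The key step is to observe that the notion of ``edge-ordered subgraph'' is transitive: since $G^{\prec}$ is an edge-ordered subgraph of $H^{\prec}$, and the red copy of $H^{\prec}$ sitting inside $F^{\prec}$ inherits its edge ordering as a suborder of $\prec$ on $F^{\prec}$, the composition yields an edge-ordered copy of $G^{\prec}$ inside $F^{\prec}$ all of whose edges are red. Hence the same $F^{\prec}$, with the same $m$ edges, already witnesses the defining property for $G^{\prec}$, and therefore $\hat{r}_{\text{edge}}(G^{\prec}) \leq m = \hat{r}_{\text{edge}}(H^{\prec})$.

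There is essentially no obstacle here beyond bookkeeping; the only point that needs a careful sentence is the transitivity of the ``edge-ordered subgraph'' relation, to make sure that restricting the linear order of $F^{\prec}$ to the edges of the embedded $G^{\prec}$ produces an ordering isomorphic to $\prec$ on $G^{\prec}$ (and not merely a suborder of something). This follows immediately from the fact that $G^{\prec}$ embeds order-isomorphically into $H^{\prec}$, and $H^{\prec}$ embeds order-isomorphically into $F^{\prec}$ under the monochromatic copy.
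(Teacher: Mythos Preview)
Your proposal is correct and follows essentially the same approach as the paper: take an extremal host $F^{\prec}$ witnessing $\hat{r}_{\text{edge}}(H^{\prec})$, note that every $2$-coloring of $F^{\prec}$ contains a monochromatic edge-ordered copy of $H^{\prec}$, and then use that $G^{\prec}$ sits inside $H^{\prec}$ as an edge-ordered subgraph to conclude. Your version is in fact slightly more careful, since you explicitly spell out the transitivity of the edge-ordered subgraph relation, which the paper leaves implicit.
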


\begin{proof}
Let \( m = \hat{r}_{\text{edge}}(H^{\prec})\). By definition there exists an edge-ordered  graph \( F^{\prec} \) satisfying \( |E(F^{\prec})| = m \) and any $2$-coloring of the edges of $F^{\prec}$ contains a monochromatic edge-ordered copy of $H^{\prec}$. Since \( G^{\prec} \) is an edge-ordered subgraph of \( H^{\prec}\), it follows that there is a monochromatic edge-ordered  graph \( H^{\prec} \), then $\hat{r}_{\text{edge}}(G^{\prec}) \leq \hat{r}_{\text{edge}}(H^{\prec}).$
\end{proof}

We define the edge-ordered book graph $B_{m,n}^{\prec}$ by endowing the book graph $B_{m,n}$ with an edge order. Let the vertices of the spine be $u_1, \ldots, u_m$ and the vertices of the pages be $v_1, \ldots, v_n$. The edge order $\prec_{B_{m,n}}$ is specified as follows:
for any two spine edges $(u_i, u_j)$ and $(u_s, u_t)$ with $1 \leq i <j \leq m$ and $1 \leq s <t \leq n$, we have
$$
(u_i, u_j) \prec_{B_{m,n}} (u_s, u_t) \Longleftrightarrow (i < s) \text{ or } (i = s \text{ and } j < t),
$$
and for any two page-spine edges $(v_i, u_j)$ and $(v_s, u_t)$, we have
$$
(u_{m-1}, u_{m}) \prec_{B_{m,n}} (v_i, u_j) \prec_{B_{m,n}} (v_s, u_t) \Longleftrightarrow (i < s) \text{ or } (i = s \text{ and } j < t).
$$

Edge-ordered bipartite graph $H^{\prec}$: the underlying graph is a bipartite graph $H$ with two partite sets $A$ and $B$. Let $V(A) = \{u_1, \ldots, u_s\}$ and $V(B) = \{v_1, \ldots, v_t\}$. The edge order $\prec_H$ is defined as the vertex lexicographical order: for any two edges $(u_i, v_j)$ and $(u_k, v_l)$ with $1 \leq i <j \leq s$ and $1 \leq k <l \leq t$, we have 
$$
(u_i, v_j) \prec_H (u_k, v_l) \Longleftrightarrow (i < k) \text{ or } (i = k \text{ and } j < l).
$$
All subsequent bipartite graphs satisfy this edge order. In particular, we denote the edge-ordered complete bipartite graph as $K_{s,t}^{\prec}$.

A question was asked by 
Fox and Li \cite{FL20} for size edge-ordered Ramsey numbers.

\begin{problem}{\upshape \cite{FL20}}\label{pro2}
What families of edge-ordered graphs, have linear or near-linear size edge-ordered Ramsey number?    
\end{problem}

Motivated by Problem \ref{pro2}, we study the size edge-ordered Ramsey number of graphs. 
Our results are as follows.

\begin{itemize}
  \item For any $m \geq 25$ and every $n \geq 300m^2$, Conlon et al. \cite{CFW23} showed that 
$\hat{r}(B_{m,n}) \geq m 2^m n^2 / 1200.$ 
In Section 2, we use Szemer\'{e}di's regularity lemma to prove that $\hat{r}_{\text{edge}}(B_{m,n}^{\prec}) \leq (2m^2 + m 2^{m+2})n^2-mn.
$ From (\ref{e1}), we have $\hat{r}_{\text{edge}}(B_{m,n}^{\prec}) = \Theta(m 2^m n^2)$ for sufficiently large $n$ and a fixed integer $m$ (Theorem \ref{th-5-1}). 

\item Theorem \ref{th-5-1} and (\ref{e1}) show that $\hat{r}_{\text{edge}}(B_{m,n}^{\prec}) = \Theta(m 2^m n^2)$. This together with Proposition \ref{pro-1} show that even for sparse graphs like book graphs, its size edge-ordered Ramsey number can not be linear or near-linear. Therefore, we consider Problem \ref{pro2} for some bipartite graphs. 
In Subsection 3.1, we establish that $mK_2^{\prec}$ and $P_4^{\prec}$ exhibit a linear relationship in $m$, namely $\hat{r}_{\text{edge}}(m K_2^{\prec}, P_4^{\prec}) = 3m$ for all integers $m \geq 2$ and any edge-ordered path with $4$ vertices (Theorem \ref{th-3-1}). 
In Subsection 3.2, we prove that $2K_2^{\prec}$ and $K_{s,t}^{\prec}$ satisfy a near-linear relationship in large $t$ (Theorem \ref{th-3-3}). For any integers $m \geq 3$ and $t \geq s \geq 1$, we further derive the upper and lower bounds for $\hat{r}_{\text{edge}}(m K_2^{\prec}, K_{s,t}^{\prec})$. Furthermore, both the upper and lower bounds are tight (Theorem \ref{th-3-5}). 
In Subsection 3.3, we show that the edge-ordered complete bipartite graph $K_{s,t}^{\prec}$ satisfies a linear relationship in $t$ for large $t$, that is, $\hat{r}_{\text{edge}}(K_{1,n}^{\prec}, K_{s,t}^{\prec}) \leq s^2 (n-1) + st$ for any positive integers $n, s, t$, and this upper bound is sharp (Theorem \ref{th-4-1}). For any integers $t \geq s \geq 2$, we have $\hat{r}_{\text{edge}}(K_{s,t}^{\prec}) \leq e s^2 2^{s+2} t$, and this upper bound is sharp for large $t$ (Theorem \ref{th-4-2}).
\end{itemize}

\section{Non-linear results}
\indent
\par
We first introduce some definitions and theorems to be used in the following. 

\begin{definition}
Given a graph $G$ with nonempty, disjoint vertex subsets $X$ and $Y$, the density of the pair $(X,Y)$ is defined as
$$
d(X,Y)=\frac{e(X,Y)}{|X||Y|},
$$
where $e(X,Y)$ denotes the number of edges with one endpoint in $X$ and the other in $Y$.
\end{definition}

We denote this by $d(v,Y)$ when $X=\{v\}$.

\begin{definition}
 Given $\epsilon >0$ and a graph $G$, for two disjoint nonempty vertex subsets $X$ and $Y$, the pair $(X, Y)$ is  called $\epsilon$-regular if for every $A \subseteq X$ and $B \subseteq Y$ such that $|A| \geq \epsilon|X|$ and $|B| \geq \epsilon|Y|$,
$$
|d(A, B)-d(X, Y)| \leq \epsilon.
$$
\end{definition}

In particular, a vertex subset $X$ is said to be \emph{$\epsilon$-regular} if the pair $(X,X)$ is $\epsilon$-regular.

\begin{definition}
An equitable partition of a graph $G$ is a partition of its vertex set $V(G)=V_1\cup\cdots\cup V_k$, where the subsets $V_i$ are nonempty and disjoint, and $\big||V_i|-|V_j|\big|\leq 1$ for all $1\leq i,j\leq k$.
\end{definition}

We appeal to the following strengthened version of Szemer\'{e}di's regularity lemma, whose proof appears in \cite{CFW22}.

\begin{theorem}\upshape{\cite{CFW22}}\label{CFW22}
For any $\epsilon >0$ and integer $m_0 \geq 0$, there exists an integer $M=M(m_0, \epsilon)>m_0$, and for every graph $G$, there exists an equitable partition $V(G)= V_1 \cup \ldots \cup V_k$, which satisfies $m_0 \leq k \leq M$ and 
\begin{itemize}
    \item[] $(1)$ each part $V_i$ is $\epsilon$-regular,
\item[] $(2)$ for every $1 \leq i \leq k$, there are at most $\epsilon k$ values $1 \leq j \leq k$ such that the pair $(V_i, V_j)$ is $\epsilon$-irregular.
\end{itemize}
\end{theorem}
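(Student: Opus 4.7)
The plan is to derive this strengthened regularity lemma from the standard Szemer\'{e}di regularity lemma via a two-stage refinement argument. First, I would apply the standard Szemer\'{e}di lemma to $G$ with a parameter $\epsilon_0$ chosen significantly smaller than $\epsilon$ (for instance $\epsilon_0 = \epsilon^3$) and a suitable minimum part count $m_0' \geq m_0$. This produces an equitable partition $V(G) = U_1 \cup \cdots \cup U_{k_0}$ with $m_0 \leq k_0 \leq M_0(m_0', \epsilon_0)$, in which at most $\epsilon_0 k_0^2$ pairs $(U_i, U_j)$ are $\epsilon_0$-irregular. The reservoir parameter $M(m_0, \epsilon)$ in the theorem will ultimately be derived from $M_0$ together with the refinement bound computed below.

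Next, to obtain condition $(2)$, I would use a double-counting argument: if more than $\epsilon k_0$ indices $i$ each contributed more than $\epsilon k_0$ irregular pairs, the total irregular count would exceed $\epsilon^2 k_0^2 > \epsilon_0 k_0^2$, a contradiction. Hence the set $B$ of ``bad'' indices has size at most $\epsilon k_0$. I would redistribute the vertices of $\bigcup_{i \in B} U_i$ evenly among the remaining parts; since only an $\epsilon$-fraction of the vertex set is moved, every surviving pair that was $\epsilon_0$-regular before the redistribution remains $\epsilon$-regular afterward, with equitability restored by standard balancing.

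For condition $(1)$, I would invoke an energy-increment argument on each surviving part. If some $V_i$ is not $\epsilon$-regular, then there exist $A, B \subseteq V_i$ of size $\geq \epsilon |V_i|$ witnessing large density deviation, and refining $V_i$ along these witnesses increases the Szemer\'{e}di energy $q(\mathcal{P}) = \sum_{i,j} \frac{|V_i||V_j|}{n^2} d(V_i,V_j)^2$ by at least $\epsilon^5 / 2$ (by the defect form of the Cauchy-Schwarz inequality used in the standard proof). Since $q(\mathcal{P}) \leq 1$, the refinement halts in at most $2\epsilon^{-5}$ iterations, producing the desired upper bound $M(m_0, \epsilon)$, and equitability is re-established by shifting at most one vertex per part.

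The main obstacle is controlling the interaction between the two conditions, since refining inside each $V_i$ to force self-regularity can in principle create new $\epsilon$-irregular pairs between different refined subparts, thereby damaging condition $(2)$. This is managed by the hierarchy $\epsilon_0 \ll \epsilon$: when an $\epsilon_0$-regular pair is refined into sub-pieces of relative size at least $\epsilon$, each sub-pair remains $(\epsilon_0 / \epsilon^2)$-regular, and by choosing $\epsilon_0$ small enough this is still well within the $\epsilon$-regularity threshold. A careful bookkeeping of the parameter hierarchy is the technical crux of the argument.
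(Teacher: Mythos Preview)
The paper does not supply a proof of this statement; it is quoted from \cite{CFW22} with the explicit remark that ``the proof appears in \cite{CFW22}'', so there is no in-paper argument against which to compare your proposal.

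Judging your outline on its own: you correctly locate the central tension---refining for self-regularity can destroy the cross-pair regularity secured in stage one---but the resolution you offer does not close the gap. The slicing principle you invoke (sub-pieces of an $\epsilon_0$-regular pair are $(\epsilon_0/\epsilon^{2})$-regular) is valid only when every sub-piece occupies at least an $\epsilon$-fraction of its parent. The energy-increment refinement you describe, however, partitions $V_i$ along the Venn diagram of the witnesses $A,B$, producing cells of completely uncontrolled size; after $O(\epsilon^{-5})$ rounds a single part may shatter into as many as $4^{O(\epsilon^{-5})}$ pieces, some arbitrarily small, and for such pieces the slicing bound is vacuous regardless of how small $\epsilon_0$ is (certainly $\epsilon_0=\epsilon^{3}$ is nowhere near sufficient). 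The claim that ``equitability is re-established by shifting at most one vertex per part'' is likewise false once piece sizes differ by more than a bounded factor.

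Repairing these issues forces you to interleave equitisation with the self-regularity refinement at every round, at which point the clean separation into two independent stages collapses and you are essentially re-proving the regularity lemma with both conditions built in from the outset---which is in fact how the result is established in the cited reference.
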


We additionally require the following consequence of the counting lemma, whose proof is given in \cite{CFW22}. Let $Q$ be a copy of clique $K_n$ in a graph $G$. A vertex $u\in V(G)$ is said to \emph{extend $Q$} if $u$ is a common neighbor of all $n$ vertices of $Q$.

\begin{theorem}\upshape{\cite{CFW22}}\label{c22}
Fix an integer $n \geq 2$, and let $\epsilon,\delta\in\bigl(0,\frac{1}{2}\bigr)$ satisfy $\epsilon\le\delta^{3 n^{2}}$. Suppose $X$ is a vertex subset of a graph $G$ which is $\epsilon$-regular and has edge density at least $\delta$. Then $X$ contains at least one $K_{n}$. Furthermore, if $Q$ is a uniformly random $K_{n}$ copy in $X$, then for $ \forall \:
u \in V(G)$,
$$
\operatorname{Pr}\bigl(u \text{ extends }Q\bigr)\ge d(u,X)^{n}-\delta.
$$
\end{theorem}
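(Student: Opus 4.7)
The plan is to derive Theorem \ref{c22} from the standard $K_n$-counting lemma by counting copies of $K_n$ in two subsets and taking a ratio. For the existence of $K_n$ in $X$: since $X$ is $\epsilon$-regular with edge density at least $\delta$, the counting lemma yields at least $(\delta^{\binom{n}{2}} - O_n(\epsilon))|X|^n/n!$ copies of $K_n$ in $X$, which is strictly positive under the hypothesis $\epsilon \leq \delta^{3n^2}$, so the first claim follows.

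For the probability bound I would first dispose of the trivial case $p := d(u,X) < \epsilon$, in which
\[
p^n - \delta \;\leq\; \epsilon^n - \delta \;\leq\; \delta^{3n^3} - \delta \;<\; 0
\]
(using $\delta < 1/2$), so the asserted inequality holds vacuously. Otherwise $p \geq \epsilon$, and setting $Y := N(u) \cap X$ one has $|Y| = p|X| \geq \epsilon|X|$, and $u$ extends $Q$ exactly when $V(Q) \subseteq Y$, so
\[
\Pr(u \text{ extends } Q) \;=\; \frac{\#K_n(G[Y])}{\#K_n(G[X])}.
\]
The denominator equals $(\delta^{\binom{n}{2}} \pm O_n(\epsilon))|X|^n/n!$ by the counting lemma applied to $X$. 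For the numerator I would invoke a hereditary form of the counting lemma: for $Y \subseteq X$ with $|Y| \geq \epsilon|X|$, the pair $(Y,Y)$ inherits $O(\epsilon/p)$-regularity from $(X,X)$ and has density at least $\delta - \epsilon \geq \delta/2$, giving $\#K_n(G[Y]) \geq ((\delta - \epsilon)^{\binom{n}{2}} - O_n(\epsilon/p))|Y|^n/n!$. Dividing and simplifying yields
\[
\Pr(u \text{ extends } Q) \;\geq\; p^n - O_n\!\bigl(\epsilon/\delta^{\binom{n}{2}}\bigr),
\]
which is at least $p^n - \delta$ once $\epsilon \leq \delta^{3n^2}$.

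The main obstacle I would expect is the hereditary counting step, since when $|Y|$ is close to the threshold $\epsilon|X|$ the effective regularity parameter $\epsilon/p$ approaches $1$ and the counting-lemma error ceases to decay. The cubic exponent $3n^2$ in $\epsilon \leq \delta^{3n^2}$ is tailored to simultaneously absorb (i) the $\binom{n}{2}$-fold product error from the counting lemma, (ii) the hereditary regularity loss in passing from $X$ to $Y$, and (iii) the division by $p^n$, which can be as small as $\epsilon^n$. Careful bookkeeping of these three error sources, as carried out in \cite{CFW22}, then yields the stated bound.
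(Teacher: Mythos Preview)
The paper does not give its own proof of this statement: Theorem~\ref{c22} is quoted as a result from \cite{CFW22} (introduced with the phrase ``whose proof is given in \cite{CFW22}''), so there is no in-paper argument to compare against.

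Your sketch is nonetheless the right approach and is essentially the one taken in \cite{CFW22}: count $K_n$'s in $X$ via the counting lemma, count $K_n$'s in $Y=N(u)\cap X$ via a hereditary version, and take the ratio. You have also correctly isolated the one genuine obstacle, namely that placing the trivial-case cut-off at $p=\epsilon$ lets the inherited regularity parameter $\epsilon/p$ blow up to~$1$. The fix is simply to raise the threshold: if $p<\delta^{n}$ then already $p^{n}<\delta$ and the asserted inequality is vacuous, while if $p\ge\delta^{n}$ then $\epsilon/p\le\delta^{3n^{2}-n}$, which is still small enough (relative to the density $\ge\delta-\epsilon$ of $Y$) for the counting lemma on $Y$ to yield a usable lower bound after dividing by the upper bound on $\#K_n(G[X])$. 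With that single adjustment your bookkeeping closes and gives $\Pr(u\text{ extends }Q)\ge p^{n}-\delta$.
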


\begin{definition}\upshape{\cite{CFW23}}\label{de-3-4}
Fix an integer $k\ge2$ and $\epsilon,\delta\in(0,1)$. Let $G$ be a $2$-colored graph and $V_1,\dots,V_k$ be disjoint vertex subsets of  $G$. We define $\{V_1,\dots,V_k\}$ as a $(k,\epsilon,\delta)$-good configuration if it satisfies the following conditions:
\begin{itemize}
    \item[] $(1)$ $V_1\cup\cdots\cup V_k$ induces a complete subgraph of $G$, that is, any two distinct vertices from this union are connected by an edge in $G$.
 \item[] $(2)$ Each $V_i$ is $\epsilon$-regular in the red subgraph, with internal red edge density at least $\delta$.
 \item[] $(3)$ All pairs $(V_i,V_j)$ for $1 \leq i <j \leq k$ are $\epsilon$-regular in the blue subgraph, with blue edge density at least $\delta$.
\end{itemize}
\end{definition}

The following lemma indicates that colorings with $(k,\epsilon,\delta)$-good configurations also contain large monochromatic books, as shown in \cite{CFW23}.

\begin{theorem}\upshape{\cite{CFW23}}\label{CFW23}
Fix $k \geq 2$, $0<\delta \leq 2^{-k-1}$, and $0<\epsilon \leq \delta^{3 k^2}$. Suppose the edges of graph $G$ are $2$-colored, with $V_1, \ldots, V_k$ being a $(k, \epsilon, \delta)$-good configuration in $G$. If the vertices in $V_1 \cup \cdots \cup V_k$ have $t$ common neighbors in $G$, then the coloring contains a monochromatic $B_{k, 2^{-k-1} t}$. 
\end{theorem}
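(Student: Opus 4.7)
The aim is to produce a monochromatic spine $K_k$ together with $2^{-k-1}t$ common neighbors extending it in the same color. Following the standard regularity paradigm, I would look for either a red spine residing inside a single class $V_i$---where Theorem \ref{c22} applies directly to the $\epsilon$-regular red subgraph of density at least $\delta$---or a blue spine with exactly one vertex in each $V_i$, exploiting the $\epsilon$-regular blue pairs. For each common neighbor $u$ and each $i \in [k]$, write $d^R_i(u)$ and $d^B_i(u) = 1 - d^R_i(u)$ for the red and blue densities from $u$ into $V_i$; these sum to $1$ because clause (1) of Definition \ref{de-3-4} forces $u$ to be adjacent in $G$ to every vertex of $V_i$.

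The heart of the argument is a pointwise dichotomy, namely
\[
\prod_{i=1}^{k} d^B_i(u) \;+\; \sum_{i=1}^{k} d^R_i(u)^{k} \;\geq\; 2^{-k},
\]
valid for every common neighbor $u$: if $d^R_j(u) \geq 1/2$ for some $j$, then $d^R_j(u)^k \geq 2^{-k}$, and otherwise every $d^B_i(u) > 1/2$ makes the product itself exceed $2^{-k}$. Summing over the $t$ common neighbors gives
\[
\sum_{u} \prod_{i=1}^{k} d^B_i(u) \;+\; \sum_{i=1}^{k} \sum_{u} d^R_i(u)^{k} \;\geq\; 2^{-k}\, t.
\]
Each of the $k+1$ terms can be read as an expected monochromatic extension count: Theorem \ref{c22} inside $V_i$ yields a uniformly random red $K_k$ that each common neighbor $u$ extends in red with probability at least $d^R_i(u)^{k} - \delta$, while independently uniform samples $x_i \in V_i$ produce a blue $K_k$ spine (on iterating $\epsilon$-regularity across the $\binom{k}{2}$ blue pairs) extended in blue by $u$ with probability at least $\prod_i d^B_i(u)$ up to an additive error $O(\epsilon k^2)$. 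A case split on whether the blue sum or the aggregated red sum accounts for the bulk of the mass $2^{-k}t$ then singles out a monochromatic spine extended by at least $2^{-k-1}t$ common neighbors in the matching color, which is exactly the book $B_{k,\,2^{-k-1}t}$ we seek.

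The main obstacle, in my view, is controlling the cumulative regularity errors on the blue side: one must iterate $\epsilon$-regularity across all $\binom{k}{2}$ blue pairs while simultaneously tracking the $k$ extension densities $d^B_i(u)$ for every candidate page $u$. The hypothesis $\epsilon \leq \delta^{3k^2}$ is calibrated precisely so that the accumulated slack stays far below $2^{-k-1}$, and the hypothesis $\delta \leq 2^{-k-1}$ prevents the $-\delta$ term in Theorem \ref{c22} from eroding the $2^{-k}$ mass. Matching the stated constant $2^{-k-1}$ exactly---rather than the weaker $\Omega(2^{-k}t/k)$ that a crude pigeonhole over the $k$ red summands would produce---is the subtle step, and it requires distributing the common neighbors across the blue option and the various red $V_i$ options more carefully than a direct split. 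This redistribution, together with the bookkeeping of regularity errors, is precisely the ingredient I would expect to borrow from the original argument in \cite{CFW23}.
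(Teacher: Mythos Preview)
The paper does not prove this theorem: it is quoted verbatim from \cite{CFW23} and used as a black-box tool in the proof of Theorem~\ref{th-5-1}, with no argument supplied. There is therefore nothing in the present paper to compare your proposal against.

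For what it is worth, your sketch does track the original Conlon--Fox--Wigderson argument: the dichotomy between a red spine inside a single $V_i$ (handled by Theorem~\ref{c22}) and a blue transversal spine across the $V_i$, the pointwise inequality $\prod_i d_i^B(u) + \sum_i d_i^R(u)^k \ge 2^{-k}$, and the observation that $\epsilon \le \delta^{3k^2}$ and $\delta \le 2^{-k-1}$ are calibrated to absorb the accumulated regularity errors. You also correctly flag the delicate point, namely that a naive pigeonhole over the $k+1$ summands would only give $\Omega(2^{-k}t/k)$ pages rather than $2^{-k-1}t$; in \cite{CFW23} this is handled by first testing whether the blue term alone contributes at least $2^{-k-1}t$, and if not, passing the entire remaining mass of at least $2^{-k-1}t$ to the red side and then pigeonholing only over the $k$ red terms after absorbing the $k\delta t \le k\cdot 2^{-k-1}t$ error---which is still not quite enough as stated, so the actual argument in \cite{CFW23} arranges the constants slightly differently. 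Your proposal is an accurate high-level outline but, as you acknowledge, the precise bookkeeping must be imported from the source.
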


\begin{theorem}\upshape{\cite{BM08}}\label{Turan}
 Given a graph $G$ with $n$ vertices, the maximum number of edges in a $K_m$-free subgraph of $G$ is $\left(1-\frac{1}{m-1}\right)\frac{n^2}{2}$.
\end{theorem}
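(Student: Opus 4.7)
The plan is to prove Turán's theorem via Zykov symmetrization, a technique that directly identifies the extremal $K_m$-free graphs as complete multipartite graphs with at most $m-1$ parts. Let $G$ be any $K_m$-free graph on $n$ vertices that achieves the maximum number of edges; the goal is to show that $G$ has the structure of the Turán graph $T(n,m-1)$, after which the edge count follows by a short computation.

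The core step is to prove that non-adjacency is an equivalence relation on $V(G)$. Reflexivity and symmetry are immediate, so only transitivity requires work. Suppose for contradiction that $u,v,w\in V(G)$ satisfy $uv,vw\notin E(G)$ but $uw\in E(G)$. I would split on degrees. If $d(v)<d(u)$, then cloning $v$ from $u$ --- deleting all edges at $v$ and attaching $v$ to the neighborhood of $u$ --- produces a graph $G'$ with $d(u)-d(v)>0$ more edges. Moreover $G'$ is $K_m$-free: any $K_m$ in $G'$ containing the new $v$ would consist of $v$ together with $m-1$ common neighbors lying in $N(u)$, which together with $u$ would yield a $K_m$ in $G$ (using that $u$ is non-adjacent to the new $v$, so $u$ is not among those $m-1$). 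This contradicts extremality of $G$. The case $d(v)<d(w)$ is symmetric. In the remaining case $d(v)\ge d(u)$ and $d(v)\ge d(w)$, simultaneously cloning both $u$ and $w$ from $v$ yields a graph with edge-count change $2d(v)-d(u)-d(w)+1\ge 1$ (the $+1$ arises from the removed edge $uw$ being double-counted between $u$ and $w$), and it is again $K_m$-free by the same twin argument.

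With transitivity established, $G$ is complete multipartite with parts $V_1,\dots,V_r$, and the $K_m$-free hypothesis forces $r\le m-1$. A standard balancing step --- whenever $|V_i|\ge|V_j|+2$, moving one vertex from $V_i$ to $V_j$ strictly increases the number of cross edges --- shows that the optimum among complete $(m-1)$-partite graphs is the balanced Turán graph $T(n,m-1)$; since adding empty parts up to $r=m-1$ never decreases edges, we may assume $r=m-1$. A direct count gives $|E(T(n,m-1))|=\tfrac{1}{2}(n^2-\sum_i|V_i|^2)$, and by Cauchy-Schwarz $\sum_i|V_i|^2\ge n^2/(m-1)$, yielding the bound $\left(1-\tfrac{1}{m-1}\right)\tfrac{n^2}{2}$.

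The principal technical subtlety is verifying that the cloning operations preserve $K_m$-freeness, since one must ensure that no new $K_m$ is created among the cloned vertices or through interaction with pre-existing edges. The twin argument handles this because two vertices both cloned from $v$ share the neighborhood $N(v)$ and are therefore non-adjacent (as $v\notin N(v)$), so any $K_m$ in $G'$ uses at most one of them and translates back to a $K_m$ in $G$ via the cloning map; carefully tracking this observation through each case of the symmetrization completes the proof.
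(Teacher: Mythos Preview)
The paper does not supply its own proof of this statement; it is quoted as a classical result from Bondy--Murty \cite{BM08} and used only in the contrapositive direction (if $|E(F)|>(1-\tfrac{1}{m-1})\tfrac{k'^2}{2}$ then $F$ contains a $K_m$). Your Zykov symmetrization argument is correct and is one of the standard textbook proofs, so there is nothing to compare against and nothing missing. One small remark: once you have shown that the extremal graph is complete $(m-1)$-partite, the Cauchy--Schwarz estimate $\sum_i|V_i|^2\ge n^2/(m-1)$ already gives the upper bound $\bigl(1-\tfrac{1}{m-1}\bigr)\tfrac{n^2}{2}$ for \emph{every} complete $(m-1)$-partite graph, so the balancing step is only needed if you also want to identify the extremal graph as $T(n,m-1)$; for the inequality itself it is redundant.
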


The proof idea of the following theorem is from \cite{CFW23}.

\begin{theorem}\label{th-5-1}
Let $m$ be an integer with $m \geq 2$. If $n$ is sufficiently large, then 
$$
\hat{r}_{\text{edge}}(B_{m,n}^{\prec}) \leq (2m^2 + m 2^{m+2})n^2-mn.
$$
\end{theorem}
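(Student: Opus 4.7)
My plan is to adapt the Szemerédi regularity argument of Conlon--Fox--Wigderson \cite{CFW23} for the unordered size Ramsey number $\hat{r}(B_{m,n})$ to the edge-ordered setting by carefully designing the edge-ordering $\prec_F$ on the host graph $F$. The guiding observation is that if we order the vertices of $F$ so that a dedicated ``spine side'' $S$ comes strictly before a dedicated ``page side'' $P$, and then list first all $S$-internal edges in lex order induced by the vertex order on $S$ and then all $S$--$P$ edges in lex order of (page index, spine index), then any copy of $B_{m,n}$ whose spine is embedded in $S$ and whose pages are embedded in $P$ will automatically inherit the ordering $\prec_{B_{m,n}}$ prescribed in the introduction.

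First I would fix the target graph $F$ to be (essentially) the one used in \cite{CFW23}, whose edge count does not exceed $(2m^2 + m 2^{m+2})n^2 - mn$, and partition $V(F)$ into the two blocks $S$ and $P$ as above. Given any $2$-coloring of $E(F^{\prec})$, I would apply the strengthened Szemerédi regularity lemma (Theorem \ref{CFW22}) to the majority-color subgraph with parameters $\delta \leq 2^{-m-1}$ and $\epsilon \leq \delta^{3 m^{2}}$, obtaining an equitable partition $V_1, \ldots, V_k$. Turán's theorem (Theorem \ref{Turan}) applied to the ``reduced graph'' on the parts---whose edges encode dense $\epsilon$-regular pairs in one of the two colors---produces $m$ parts forming an $(m, \epsilon, \delta)$-good configuration in the sense of Definition \ref{de-3-4}. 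Theorem \ref{CFW23}, combined with the counting lemma (Theorem \ref{c22}) used to extend the configuration into a large common neighborhood, then delivers a monochromatic (unordered) copy of $B_{m,n}$ whose spine lies in $S$ and whose pages lie in $P$; by the choice of $\prec_F$ this copy is in fact a monochromatic edge-ordered copy of $B_{m,n}^{\prec}$.

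The two coefficients are tuned to the two steps above: the $2m^2$ term absorbs the Turán density $1 - \frac{1}{m-1} = \frac{m-2}{m-1}$ required to locate a $K_m$ in the reduced graph, while the $m 2^{m+2}$ term is exactly what is needed so that the common neighborhood of the good configuration has size at least $2^{m+1} n$, which is the threshold in Theorem \ref{CFW23} for producing a book with $n$ pages.

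The main obstacle will be the calibration of $\prec_F$: the prescribed ordering $\prec_{B_{m,n}}$ requires lex-by-$(\min,\max)$ on the spine--spine edges but lex-by-(page, spine) on the page--spine edges, and these two orders are not jointly realized by any single global lex ordering on $V(F)$ unless the spine candidates and the page candidates sit in disjoint intervals of the vertex order. This two-interval structure is what constrains the shape of $F$ and forces us to track the ``spine subgraph'' of $F$ and the ``spine--page bipartite part'' of $F$ separately through the entire regularity--Turán--counting chain; verifying that the numerical bookkeeping remains within the budget $(2m^2 + m 2^{m+2})n^2 - mn$ for $n$ sufficiently large is where the real work of the proof lies.
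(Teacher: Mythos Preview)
Your host graph and toolkit match the paper exactly: the paper also takes $F^{\prec}=B_{2mn,\,2^{m+1}n}^{\prec}$, applies the regularity lemma of Theorem~\ref{CFW22} to one color on the spine, builds a reduced graph on the regular parts, and then invokes Tur\'an/Theorem~\ref{CFW23}/Theorem~\ref{c22}. So the overall strategy is right.

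The gap in your plan is the missing dichotomy. Tur\'an's theorem only yields a $K_m$ in the reduced graph when that graph has minimum degree exceeding $(1-\tfrac{1}{m-1})k'$; nothing in the setup guarantees this. The paper splits into two cases. In Case~1 (high minimum degree) Tur\'an gives an $(m,\epsilon,\delta)$-good configuration and Theorem~\ref{CFW23} finishes, using the $2^{m+1}n$ page vertices of $F^{\prec}$ as the common neighborhood. In Case~2 (some part $V_1$ has low reduced-degree) one instead observes that $V_1$ sees many parts $V_j$ through $\epsilon$-regular pairs of red density at least $1-\delta$; here Theorem~\ref{c22} is applied directly to the red-dense set $V_1$ to find a red $K_m$ with at least $n$ red extensions into $W=\bigcup_{j\in U}V_j$, and Theorem~\ref{CFW23} is not used at all. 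Your single pipeline ``Tur\'an $\to$ good configuration $\to$ Theorem~\ref{CFW23} together with Theorem~\ref{c22}'' conflates these two independent branches; in particular Theorem~\ref{c22} is not used to ``extend the configuration into a large common neighborhood'' but is the engine of the second branch.

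This also undermines your edge-ordering bookkeeping. In Case~2 both the red $K_m$ and its $n$ red extensions live inside the spine block $S$ (since every $V_i$ partitions the spine), so the monochromatic $B_{m,n}$ does \emph{not} have its pages in $P$ as you assume. You will need to argue separately that a book sitting entirely inside the lex-ordered clique on $S$ is still edge-order isomorphic to $B_{m,n}^{\prec}$, which is more delicate than the ``spine in $S$, pages in $P$'' situation you planned for.
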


\begin{proof}
 Let $G^{\prec}=B_{2mn, 2^{m+1} n}^{\prec}$. Then $|E(G^{\prec})|=\binom{2mn}{2}+2mn \cdot 2^{m+1} n=(2m^2 + m 2^{m+2})n^2-mn$. Let $\delta=\min \{2^{-m-1}, \frac{1}{10 m^3}\}$, $\epsilon=\delta^{3m^2}$, and $m_0=10 m^2$. Fix a $2$-coloring $\chi$ of $G^{\prec}$, by applying Theorem \ref{CFW22}, we analyze the red graph corresponding to the spine-induced subgraph of $G^{\prec}$. Then there exists an equitable partition $V_1 \cup \ldots \cup V_k$, which satisfies $m_0 \leq k \leq M$ and each part $V_i$ is $\epsilon$-regular, and for every $1 \leq i \leq k$, there are at most $\epsilon k$ values $1 \leq j \leq k$ such that the pair $(V_i, V_j)$ is $\epsilon$-irregular. 

\setcounter{fact}{0}
\begin{fact}
 If $(V_i, V_j)$ is $\epsilon$-regular in red subgraph, then  $(V_i, V_j)$ is also $\epsilon$-regular in blue subgraph. 
\end{fact}

We refer to a part $V_i$ as \emph{red} if at least half of its internal edges are red; otherwise, we term it \emph{blue}. Without loss of generality, assume that at least half of the parts are red. Relabel these red parts as $V_1, \ldots, V_{k'}$ where $5 m^2=\lceil \frac{m_0}{2} \rceil \leq \lceil \frac{k}{2} \rceil \leq k' \leq k$. We construct a reduced graph $F$ with vertex set $\{v_1, \ldots, v_{k'}\}$. An edge $v_i v_j$ exists in $F$ if the pair $(V_i, V_j)$ is $\epsilon$-regular and its blue edge density is at least $\delta$.

\setcounter{case}{0}
\begin{case}
 $\delta(F) > (1-\frac{1}{m-1})k'.$  
\end{case}

As $\delta(F) > (1-\frac{1}{m-1})k'$, we have $|E(F)| > (1-\frac{1}{m-1}) \frac{k'^2}{2}$. Applying Theorem \ref{Turan}, we find that $F$ contains a copy of $K_m$. Let $v_{i_1},\ldots,v_{i_m}$ be the vertices of this $K_m$, and let $V_1 = V_{i_1},\ldots,V_m = V_{i_m}$. 
Observe that all vertices in $V_1\cup\cdots\cup V_m$ lie in the spine of $G^{\prec}$. This union induces a complete subgraph in $G^{\prec}$. In addition, each $V_i$ is $\epsilon$-regular in the red subgraph and its red edge density of at least $\frac{1}{2} \geq \delta$. Furthermore, all pairs $(V_i,V_j)$ with $1 \leq i <j \leq m$ are $\epsilon$-regular in the blue subgraph and its blue edge density of at least $\delta$. We conclude that $\{V_1,\ldots,V_m\}$ forms a $(m,\epsilon,\delta)$-good configuration by Definition \ref{de-3-4}. 
The vertices in $V_1 \cup \cdots \cup V_m$ have $2^{m+1} n$ common neighbors in $G^{\prec}$. By Theorem \ref{CFW23}, it follows that a monochromatic $B_{m, n}^{\prec}$ exists under the coloring $\chi$. 
Given that $B_{m,n}^{\prec}$ is a subgraph of $G^{\prec}$, it follows that its edge order is preserved. In other words, we obtain a monochromatic edge-ordered $B_{m,n}^{\prec}$, as required.

\begin{case}
There exists a vertex in $F$ with degree at most $\bigl(1-\frac{1}{m-1}\bigr)k'$.  
\end{case}

Without loss of generality, we denote this vertex by $v_1$, where $d_F(v_1) \leq \bigl(1-\frac{1}{m-1}\bigr)k'$. Then there are at most $\bigl(1-\frac{1}{m-1}\bigr)k'$ parts $V_j$ (with $2 \leq j \leq k'$) satisfying that the pair $(V_1, V_j)$ is $\epsilon$-regular and has blue edge density of at least $\delta$. As a result, we obtain at least $\frac{k'}{m-1}-1$ parts $V_j$ (with $2 \leq j \leq k'$) for which the pair $(V_1, V_j)$ is either $\epsilon$-irregular or its blue edge density is less than $\delta$. 
Thus, let $U$ denote the set of indices $j$ with $2 \leq j \leq k'$ such that the pair $(V_1, V_j)$ is $\epsilon$-regular and has blue edge density less than $\delta$. From Theorem \ref{CFW22}, the number of $\epsilon$-irregular pairs $(V_1, V_j)$ is no more than $\epsilon k$. Note that $k' \geq 5 m^2$ and $\epsilon \leq \delta \leq \frac{1}{10 m^3}$ for $m \geq 2$. Then 
$$
\begin{aligned}
k-1 \geq k'-1 \geq |U| & \geq\frac{k^{\prime}}{m-1}-1- \epsilon k \\[0.2cm]
& \geq\left(\frac{1}{m-1}-\frac{2}{5 m^2}\right) k^{\prime}\\[0.2cm]
&=\left(\frac{1}{m-1}-\frac{1}{m^2}+\frac{3}{5 m^2}\right) k^{\prime} \\[0.2cm]
&\geq\left(\frac{1}{m}+\frac{3}{5 m^2} \right) k^{\prime}\\[0.2cm]
&\geq\left(\frac{1}{m}+3 m \delta\right) k^{\prime},
\end{aligned}
$$
where the third inequality follows from $\frac{1}{m-1}-\frac{1}{m}=\frac{1}{m(m-1)} \geq \frac{1}{m^2}$.

By Theorem \ref{CFW22}, the partition is equitable, which implies that $|V_i| \geq \left\lfloor \frac{2mn}{k} \right\rfloor \geq \frac{2mn}{k}-1$ for $1 \leq i \leq k$. Let $W=\bigcup_{j \in U} V_j$. Then 
$$
\begin{aligned}
|W| & \geq \left(\frac{1}{m}+3 m \delta\right) k^{\prime} \cdot \left(\frac{2mn}{k}-1 \right)\\[0.2cm]
& \geq \left(\frac{1}{m}+3 m \delta\right)mn-(k-1)\\[0.2cm]
& > \left(\frac{1}{m}+3 m \delta\right)mn-M=\left(\frac{1}{m}+3 m \delta -\frac{M}{mn}\right)mn\\[0.2cm]
&\geq \left(\frac{1}{m}+2 m \delta\right) mn,
\end{aligned}
$$
where the second inequality is derived from $k' \geq \left\lceil \frac{k}{2} \right\rceil \geq \frac{k}{2}$ and $3 m \delta \leq \frac{3}{10 m^2} < \frac{1}{m} $, the third inequality follows from $k \leq M$, and the fourth inequality stems from $\frac{M}{mn} \leq m\delta$ given that $n$ is sufficiently large.

Observe that $V_1$ is $\epsilon$-regular and its red density is at least $\frac{1}{2} \geq \delta$ and $\epsilon=\delta^{3m^2}$. Theorem \ref{c22} guarantees that $V_1$ contains at least one red $K_{m}$. For a uniformly random red $K_{m}$ copy $Q$ in $V_1$, the probability of any vertex $u \in W$ extending $Q$ is no less than $d_R\bigl(u, V_1\bigr)^m - \delta$, with $d_R$ representing the edge density in the red graph. 
We define a random variable $Y_u$ for each vertex $u\in W$, which takes the value 1 if and only if $u$ extends $Q$, and 0 otherwise. Thus, $Y_u$ is a Bernoulli random variable, and
$$
\mathbb{E}[Y_u] = \Pr(Y_u = 1) = \Pr\left(u \text{ extends } Q\right).
$$

As the function $f(x)=x^m$ is convex for $x \geq 0$ and $m\geq 1$, application of Jensen's inequality
$f\left(\frac{1}{n} \sum_{i=1}^n  x_i\right) \leq \frac{1}{n} \sum_{i=1}^n f(x_i),$
gives the inequality
\begin{equation}\label{e5}
\sum_{i=1}^n x_i^m \geq n \left(\frac{1}{n} \sum_{i=1}^n  x_i\right)^m.
\end{equation}
Since $U$ is defined as the set of indices $j$ satisfying $2 \leq j \leq k'$, $(V_1, V_j)$ is $\epsilon$-regular, and its blue edge density of $(V_1, V_j)$ is less than $\delta$, and $W=\bigcup_{j \in U} V_j$, it follows that
\begin{equation}\label{e6}
d_R\left(V_1, V_j\right) \geq 1-\delta    
\end{equation}
for all $j \in U$.

By (\ref{e5}) and (\ref{e6}), and the linearity of expectation, we have
$$
\begin{aligned}
\mathbb{E}\left[\sum_{u \in W} Y_u\right] 
&= \sum_{u \in W} \mathbb{E}[Y_u] = \sum_{u \in W} \Pr(u \text{ extends } Q)\\[0.2cm]
& \geq \sum_{u \in W}\left(d_R\left(u, V_1\right)^m-\delta\right) = \sum_{u \in W} d_R\left(u, V_1\right)^m-\delta |W|\\[0.2cm]
&\geq \left(\frac{1}{|W|} \sum_{u \in W} d_R\left(u, V_1\right)\right)^m |W|-\delta|W|\\[0.2cm]
& \geq \left((1-\delta)^m-\delta\right)|W|\\[0.2cm]
& \geq (1-2 m \delta) \left(\frac{1}{m}+2 m \delta\right) mn\\[0.2cm]
& \geq \left(\frac{1}{m}+2 m \delta-2 m \delta\right) mn = n,
\end{aligned}
$$
where the fourth inequality follows from Bernoulli's inequality $(1+x)^y \geq 1+xy$ for $x \geq -1$ and $y\geq 0$, and the last inequality follows from $2m \delta \leq \frac{1}{5m^2} \leq \frac{m-1}{m}$ for $m\geq 2$.

Consequently, there exists a red $K_m$ in $V_1$ that has at least $n$ red extensions, i.e., a red $B_{m,n}^{\prec}$. Given that $B_{m,n}^{\prec}$ is a subgraph of $G^{\prec}$, it follows that its edge order is preserved. In other words, we obtain a monochromatic edge-ordered $B_{m,n}^{\prec}$, as required.
\end{proof}

For any $m \geq 25$ and every $n \geq 300m^2$, $\hat{r}(B_{m,n}) \geq m 2^m n^2 / 1200$, a result established in \cite{CFW23}. By Theorem \ref{th-5-1} and (\ref{e1}), we have the following corollary:
\begin{corollary}\label{cor1}
$\hat{r}_{\text{edge}}(B_{m,n}^{\prec}) = \Theta(m 2^m n^2)$ for sufficiently large $n$ and a fixed integer $m$.    
\end{corollary}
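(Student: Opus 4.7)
The plan is to pick as host graph the edge-ordered book $G^{\prec} = B_{2mn,\,2^{m+1}n}^{\prec}$; a direct count shows it has exactly $(2m^2 + m2^{m+2})n^2 - mn$ edges, matching the claimed bound. The crucial structural feature of this choice is twofold: the spine of $G^{\prec}$ is a clique on $2mn$ vertices, so Szemerédi-type arguments for cliques apply verbatim inside it; and every set of $m$ vertices from the spine has exactly $2^{m+1}n$ common neighbors, which is precisely the reservoir of pages we will need to harvest book leaves. Moreover, since $B_{m,n}^{\prec}$ is obtained from $G^{\prec}$ by restriction, any $B_{m,n}$ we find with spine in the spine of $G^{\prec}$ and pages among the pages of $G^{\prec}$ will automatically inherit the correct edge order, so the edge-ordered condition is free.

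Given a $2$-coloring $\chi$, I would set $\delta = \min\{2^{-m-1}, 1/(10m^3)\}$, $\epsilon = \delta^{3m^2}$, and $m_0 = 10m^2$, then apply the strengthened regularity lemma (Theorem \ref{CFW22}) to the red subgraph induced on the spine. Call a part \emph{red} if at least half of its internal edges are red; WLOG at least half the parts are red, giving $k' \geq k/2 \geq 5m^2$ red parts $V_1,\dots,V_{k'}$. Build a reduced graph $F$ on these parts whose edges are the $\epsilon$-regular pairs with blue density $\geq \delta$. Now split into two cases according to whether $\delta(F) > (1 - 1/(m-1))k'$ or not.

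In the first case, Turán's theorem (Theorem \ref{Turan}) produces a $K_m$ in $F$; the corresponding parts form a $(m,\epsilon,\delta)$-good configuration in the sense of Definition \ref{de-3-4}, since each part is red-internally $\epsilon$-regular with density $\geq 1/2 \geq \delta$ and all cross-pairs are blue-$\epsilon$-regular with density $\geq \delta$. The $2^{m+1}n$ common page-neighbors then let Theorem \ref{CFW23} extract a monochromatic $B_{m,n}$, which inherits the correct edge order. In the second case, some part $V_1$ sees many $V_j$ whose red cross-density is at least $1-\delta$; collecting these $V_j$ into $W$ and using the equitable partition plus $n$ large yields $|W| \geq (1/m + 2m\delta)mn$. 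Then use Theorem \ref{c22} to find a random red $K_m$ in $V_1$ and compute, via linearity of expectation and Jensen's inequality applied to $x \mapsto x^m$, the expected number of $u \in W$ extending $Q$; Bernoulli's inequality $(1-\delta)^m \geq 1 - m\delta$ closes the calculation to give expectation at least $n$, producing a red $B_{m,n}$.

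The main obstacle will be the arithmetic bookkeeping in Case 2: threading the inequalities so that the lower bound on $|W|$, the Jensen step, and the $(1-\delta)^m - \delta$ term combine to yield at least $n$ red extensions rather than a slightly smaller quantity. The precise choices $\delta \leq 1/(10m^3)$, $m_0 = 10m^2$, and the requirement that $n$ be large enough that $M/(mn) \leq m\delta$ are what make the chain of inequalities work; getting these constants consistent with one another is the delicate part of the argument, while the conceptual structure follows the Conlon–Fox–Wigderson template cited in the paper.
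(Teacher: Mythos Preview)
Your proposal reproduces the paper's proof of Theorem~\ref{th-5-1} essentially line for line: the same host graph $B_{2mn,2^{m+1}n}^{\prec}$, the same parameters $\delta=\min\{2^{-m-1},1/(10m^3)\}$, $\epsilon=\delta^{3m^2}$, $m_0=10m^2$, the same red/blue part dichotomy, the same Tur\'an-based case split, and the same Jensen/Bernoulli arithmetic in Case~2. The only omission is that Corollary~\ref{cor1} asserts a $\Theta$ bound, so you also need the lower bound; in the paper this is immediate from the Conlon--Fox--Wigderson result $\hat r(B_{m,n})\ge m2^m n^2/1200$ together with inequality~(\ref{e1}), and you should add that one sentence.
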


\section{Linear and near-linear results}

From Corollary \ref{cor1}, both the lower and upper bounds are nonlinear. This implies that even sparse book graphs fail to admit linear bounds, which makes the search for graphs with linear bounds challenging. Hence, our focus is directed toward graph classes such as bipartite graph.

\subsection{Matchings versus small paths}
\indent
\par
Turning to Problem \ref{pro2}, we first derive foundational observations about size edge-ordered Ramsey numbers for matchings and general edge-ordered graphs, then focus on specific families to verify linearity. We begin with two key inequalities that guide our analysis.

Let $G^{\prec}$ be an edge-ordered graph with $|E(G^{\prec})| = q$. For $n \in \mathbb{N}$, $nG^{\prec}$ denotes $n$ vertex-disjoint copies $G_1^{\prec}, \dots, G_n^{\prec}$ of $G^{\prec}$, where each $G_i^{\prec}$ has edge set $\{e_{i1}, \dots, e_{iq}\}$ satisfying $e_{i1} \prec e_{i2} \prec \dots \prec e_{iq}$ for $1 \leq i \leq n.$
The edge order of $nG^{\prec}$ is defined as follows,
$$
e_{ij} \prec e_{kl} \iff i < k \quad \text{or} \quad (i = k \text{ and } j < l).
$$

For edge-ordered graphs $F^{\prec}, G^{\prec}, H^{\prec}$, we define $F^{\prec} \to (G^{\prec}, H^{\prec})$ to mean that any $2$-coloring of the edges of $F^{\prec}$ contains either a red edge-ordered copy of $G^{\prec}$ or a blue edge-ordered copy of $H^{\prec}$.

\begin{proposition}\label{prop-basic-ineq}
For edge-ordered graphs $F^{\prec}, G^{\prec}, H^{\prec}$, we have
\begin{equation}\label{e2}
\hat{r}_{\text{edge}}(m G^{\prec}, n H^{\prec}) \leq (m+n-1) \hat{r}_{\text{edge}} (G^{\prec}, H^{\prec}).    
\end{equation}
Specifically, when $G = K_2$ and $n=1$, we have

\begin{equation}\label{e3}
\max \{m, |E(H^{\prec})|\} \leq \hat{r}_{\text{edge}}(m K_2^{\prec}, H^{\prec}) \leq m \hat{r}_{\text{edge}}  (K_2^{\prec}, H^{\prec})=m|E(H^{\prec})|.    
\end{equation}
\end{proposition}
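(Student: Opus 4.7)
The plan is to establish (\ref{e2}) by a disjoint concatenation of extremal witnesses, and then derive (\ref{e3}) as a specialization together with a direct lower-bound argument.

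For (\ref{e2}), I would set $r=\hat{r}_{\text{edge}}(G^{\prec},H^{\prec})$ and let $F^{\prec}$ be an edge-ordered graph on $r$ edges such that every $2$-coloring of $F^{\prec}$ contains a red edge-ordered copy of $G^{\prec}$ or a blue edge-ordered copy of $H^{\prec}$. I then take $m+n-1$ pairwise vertex-disjoint copies $F_1^{\prec},\dots,F_{m+n-1}^{\prec}$ of $F^{\prec}$ and define a global edge order by concatenation, so that every edge of $F_i^{\prec}$ precedes every edge of $F_{i+1}^{\prec}$ while each copy retains its original internal order. Under any $2$-coloring, each $F_i^{\prec}$ independently yields either a red $G^{\prec}$ or a blue $H^{\prec}$; by pigeonhole, either at least $m$ indices yield red copies or at least $n$ indices yield blue copies. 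Because the $F_i^{\prec}$ are vertex-disjoint and ordered blockwise, any $m$ red copies chosen from indices $i_1<\cdots<i_m$ form a vertex-disjoint union whose edge order agrees precisely with the paper's definition of $mG^{\prec}$ (and analogously for $nH^{\prec}$). The total edge count is $(m+n-1)r$, giving the claimed bound.

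For (\ref{e3}), specializing $G^{\prec}=K_2^{\prec}$ and $n=1$ in (\ref{e2}) gives $\hat{r}_{\text{edge}}(mK_2^{\prec},H^{\prec})\le m\,\hat{r}_{\text{edge}}(K_2^{\prec},H^{\prec})$, and I would compute $\hat{r}_{\text{edge}}(K_2^{\prec},H^{\prec})=|E(H^{\prec})|$ directly: taking $F^{\prec}=H^{\prec}$, any $2$-coloring either contains a red edge (a red $K_2^{\prec}$) or is entirely blue (a blue $H^{\prec}$); conversely, the all-blue coloring of any witness forces that witness to contain $H^{\prec}$ as an edge-ordered subgraph. The matching lower bound $\hat{r}_{\text{edge}}(mK_2^{\prec},H^{\prec})\ge\max\{m,|E(H^{\prec})|\}$ comes from the two monochromatic colorings of any hypothetical witness: the all-red coloring forces $m$ vertex-disjoint edges, while the all-blue coloring forces an edge-ordered copy of $H^{\prec}$.

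No substantial obstacle arises; the only delicate point is to verify that the blockwise concatenation of edge orders genuinely realizes the edge order imposed on $mG^{\prec}$ (respectively $nH^{\prec}$), which reduces to unwinding the rule $e_{ij}\prec e_{kl}\iff i<k \text{ or } (i=k \text{ and } j<l)$ against the concatenated order on disjoint copies.
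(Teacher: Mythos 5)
Your proposal is correct and follows essentially the same route as the paper: the same disjoint concatenation of $m+n-1$ extremal witnesses with blockwise edge order plus a pigeonhole argument for (\ref{e2}), and the same specialization with $\hat{r}_{\text{edge}}(K_2^{\prec},H^{\prec})=|E(H^{\prec})|$ together with the containment lower bound for (\ref{e3}). Your explicit justification of $\hat{r}_{\text{edge}}(K_2^{\prec},H^{\prec})=|E(H^{\prec})|$ and the phrasing of the lower bound via the two monochromatic colorings are minor elaborations of steps the paper states without proof.
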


\begin{proof}
Let $F^{\prec}$ be a edge-ordered graph with $\hat{r}_{\text{edge}}(G^{\prec}, H^{\prec})$ edges such that $F^{\prec} \to (G^{\prec}, H^{\prec})$.
We first prove that $(m+n-1)F^{\prec} \to (mG^{\prec}, nH^{\prec})$.
Recall that $(m+n-1)F^{\prec}$ denotes $m+n-1$ vertex-disjoint copies of $F^{\prec}$, denoted $F_1^{\prec}, F_2^{\prec}, \dots, F_{m+n-1}^{\prec}$, with the edge order defined as follows: for any edges $e \in E(F_i^{\prec})$ and $f \in E(F_j^{\prec})$, $e \prec f$ if and only if $i < j$, or $i = j$ and $e \prec f$ in the original edge order of $F^{\prec}$.

Consider any 2-coloring (red/blue) of the edges of $(m+n-1)F^{\prec}$. For each copy $F_k^{\prec}$ ($1 \leq k \leq m+n-1$), the hypothesis $F^{\prec} \to (G^{\prec}, H^{\prec})$ implies that $F_k^{\prec}$ contains either a red edge-ordered copy of $G^{\prec}$ or a blue edge-ordered copy of $H^{\prec}$. By the pigeonhole principle, among these $m+n-1$ copies, there must exist either at least $m$ copies each containing a red edge-ordered copy of $G^{\prec}$ or at least $n$ copies each containing a blue edge-ordered copy of $H^{\prec}$. 

We verify the edge-order consistency for the red case (the blue case is symmetric). Let $F_{i_1}^{\prec}, \dots, F_{i_m}^{\prec}$ ($i_1 < \dots < i_m$) be the copies containing red $G^{\prec}$ copies, denoted $G_1^{\prec} \subseteq F_{i_1}^{\prec}, \dots, G_m^{\prec} \subseteq F_{i_m}^{\prec}$. For any edges $e \in G_a^{\prec}$ and $f \in G_b^{\prec}$ with $a < b$, since $i_a < i_b$, we have $e \prec f$ by the order of $(m+n-1)F^{\prec}$. For edges within $G_a^{\prec}$, their order inherits the original edge-order of $G^{\prec}$. Thus, $G_1^{\prec} \cup \dots \cup G_m^{\prec}$ is a red edge-ordered copy of $mG^{\prec}$. Hence, $(m+n-1)F^{\prec} \to (mG^{\prec}, nH^{\prec})$.

Next, we derive Inequality \eqref{e2}. From the transitivity result above, $(m+n-1)F^{\prec} \to (mG^{\prec}, nH^{\prec})$, then
$$
\hat{r}_{\text{edge}}(mG^{\prec}, nH^{\prec}) \leq (m+n-1) \hat{r}_{\text{edge}}(G^{\prec}, H^{\prec}).
$$

We now focus on the special case where $G = K_2^{\prec}$ and $n=1$, leading to Inequality \eqref{e3}. For the lower bound, to contain a monochromatic $mK_2^{\prec}$ or a monochromatic $H^{\prec}$, the graph must have at least $m$ or $|E(H^{\prec})|$ edges, respectively. Thus, the lower bound is the maximum of these two values.

Note that $\hat{r}_{\text{edge}}(K_2^{\prec}, H^{\prec}) = |E(H^{\prec})|$. Substituting $G = K_2^{\prec}$ and $n=1$ into Inequality \eqref{e2}, we get
$$
\hat{r}_{\text{edge}}(mK_2^{\prec}, H^{\prec}) \leq (m+1-1) \hat{r}_{\text{edge}}(K_2^{\prec}, H^{\prec}) = m|E(H^{\prec})|.
$$
Combining the lower and upper bounds, we have
$$
\max\{m, |E(H^{\prec})|\} \leq \hat{r}_{\text{edge}}(mK_2^{\prec}, H^{\prec}) \leq m|E(H^{\prec})|.
$$
\end{proof}

A key link to size-Ramsey numbers is as follows.

\begin{proposition}\label{prop-1}
 If $\hat{r}\bigl(m K_2, H\bigr) = m\bigl|E(H)\bigr|$, then $\hat{r}_{\text{edge}}\bigl(m K_2^{\prec}, H^{\prec}\bigr) = m\bigl|E\bigl(H^{\prec}\bigr)\bigr|$  for any edge-ordered graph $H^{\prec}$.
\end{proposition}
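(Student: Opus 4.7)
The proof is essentially a two-line sandwich argument, so the plan is to combine an already established upper bound with a trivial lower bound coming from the hypothesis.

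First, for the upper bound, I would simply invoke Inequality \eqref{e3} from Proposition \ref{prop-basic-ineq}, which states that for any edge-ordered graph $H^{\prec}$,
$$
\hat{r}_{\text{edge}}(mK_2^{\prec}, H^{\prec}) \leq m|E(H^{\prec})|.
$$
No additional work is needed here.

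Second, for the lower bound, I would use the trivial relation between size Ramsey numbers and size edge-ordered Ramsey numbers mentioned in the discussion after Definition \ref{def-seorn}, namely $\hat{r}(G,H) \leq \hat{r}_{\text{edge}}(G^{\prec}, H^{\prec})$. Applying this to $G = mK_2$ and the edge-ordered graph $H^{\prec}$ (whose underlying graph is $H$ with $|E(H)| = |E(H^{\prec})|$), together with the hypothesis $\hat{r}(mK_2, H) = m|E(H)|$, yields
$$
m|E(H^{\prec})| = m|E(H)| = \hat{r}(mK_2, H) \leq \hat{r}_{\text{edge}}(mK_2^{\prec}, H^{\prec}).
$$

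Combining the two bounds gives the claimed equality. There is no real obstacle here; the proposition is just a clean consequence of the upper bound in Proposition \ref{prop-basic-ineq} and the monotonicity between the unordered and edge-ordered size Ramsey numbers, so the only thing to be careful about is citing the right earlier inequalities explicitly.
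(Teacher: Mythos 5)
Your proposal is correct and matches the paper's proof: the paper likewise combines the lower bound $\hat{r}_{\text{edge}}(mK_2^{\prec},H^{\prec}) \geq \hat{r}(mK_2,H) = m|E(H^{\prec})|$ from Inequality \eqref{e1} with the upper bound $m|E(H^{\prec})|$ from Inequality \eqref{e3}. No differences worth noting.
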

\begin{proof}
By (\ref{e1}), we have
$$
\hat{r}_{\text {edge }}\left(m K_2^{\prec}, H^{\prec}\right) \geq \hat{r}\left(m K_2, H\right)=m|E(H)|=m\left|E\left(H^{\prec}\right)\right| .
$$
Combined with (\ref{e3}), it follows that $\hat{r}_{\text {edge }}\left(m K_2^{\prec}, H^{\prec}\right)=m\left|E\left(H^{\prec}\right)\right|$.
\end{proof}
Let $P_4^{\prec_1}$, $P_4^{\prec_2}$, $P_4^{\prec_3}$ denote edge-ordered paths with vertex set $\{v_i: 1\leq i\leq 4\}$ and edge set $\{e_i=v_iv_{i+1}: 1\leq i\leq 3\}$, where the edge orderings are given by
$$
P_4^{\prec_1}: e_1 \prec e_2 \prec e_3, \quad
P_4^{\prec_2}: e_1 \prec e_3 \prec e_2, \quad
P_4^{\prec_3}: e_2 \prec e_1 \prec e_3.
$$

To establish linearity for $m K_2^{\prec}$ and $P_4^{\prec}$, where $P_4^{\prec}\in \{P_4^{\prec_1}, P_4^{\prec_2},P_4^{\prec_3}\}$, we first analyze the base case $m=2$.

\begin{lemma}\label{le-3-1}
For any edge-ordered $P_4^{\prec}\in \{P_4^{\prec_1}, P_4^{\prec_2},P_4^{\prec_3}\}$, we have $\hat{r}_{\text{edge}}(2K_2^{\prec},P_4^{\prec})=6.$  
\end{lemma}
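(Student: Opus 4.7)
The plan is to prove $\hat{r}_{\text{edge}}(2K_2^{\prec},P_4^{\prec})=6$ by matching upper and lower bounds. The upper bound is immediate from Proposition~\ref{prop-basic-ineq} (inequality \eqref{e3}) with $m=2$ and $H^{\prec}=P_4^{\prec}$, giving $\hat{r}_{\text{edge}}(2K_2^{\prec},P_4^{\prec}) \le 2\cdot|E(P_4^{\prec})|=6$.

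For the lower bound I need to show that every edge-ordered graph $F^{\prec}$ with at most $5$ edges admits a $2$-coloring containing no red copy of $2K_2^{\prec}$ and no blue copy of $P_4^{\prec}$. A red $2K_2^{\prec}$ is just a pair of vertex-disjoint red edges (the induced ordering on two edges is automatically compatible), so the red set must be pairwise incident, that is, form a star or a triangle. I would split on the structure of $F$. If $\Delta(F)\ge 3$, colour all edges at a maximum-degree vertex red; the blue subgraph then has at most $2$ edges and contains no $P_4$. If $\Delta(F)\le 2$ and $F$ has a triangle, colour the triangle red; again the blue subgraph has at most $2$ edges. Otherwise $F$ is a triangle-free graph with $\Delta(F)\le 2$, hence a disjoint union of paths and cycles of length $\ge 4$. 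Enumerating the $5$-edge possibilities yields $C_5,\,P_6,\,C_4+K_2,\,P_5+K_2,\,P_4+P_3,\,P_4+2K_2,\,2P_3+K_2,\,P_3+3K_2,\,5K_2$. For every such $F\ne C_5$, a direct choice (e.g.\ the middle edge of a $P_4$-subpath red, or the two central edges of a $P_5$/$P_6$ component red) leaves the blue subgraph with no $P_4$ as a graph, hence no $P_4^{\prec}$ regardless of ordering.

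The crux is the case $F=C_5^{\prec}$, since $C_5$ is the unique $5$-edge graph satisfying $C_5\to(2K_2,P_4)$ in the classical sense, so an ordering-oblivious coloring cannot work. Label the cyclic edges $f_1,\dots,f_5$. For each $j\in\{1,\dots,5\}$, the coloring red $=\{f_{j+2},f_{j+3}\}$ and blue $=\{f_{j-1},f_j,f_{j+1}\}$ (indices mod $5$) has two adjacent red edges and a blue $P_4$ whose middle edge is $f_j$. This blue copy is edge-order-isomorphic to $P_4^{\prec_1}$ only when $f_j$ is middle-ranked in $\{f_{j-1},f_j,f_{j+1}\}$ under the ordering $\pi$ of $F^{\prec}$; to $P_4^{\prec_2}$ only when $f_j$ is top-ranked; to $P_4^{\prec_3}$ only when $f_j$ is bottom-ranked. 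To defeat $P_4^{\prec_1}$ or $P_4^{\prec_3}$ I choose $j$ with $f_j$ the globally $\pi$-largest edge, forcing $f_j$ to be top in its triple, hence neither middle nor bottom. To defeat $P_4^{\prec_2}$ I choose $j$ with $f_j$ the globally $\pi$-smallest edge, forcing it to be bottom in its triple, hence not top.

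The main obstacle is exactly this $C_5^{\prec}$ subcase: the Proposition~\ref{prop-1}-style reduction to unordered size-Ramsey numbers fails here because $\hat{r}(2K_2,P_4)=5$, and an ordering-dependent choice of which two adjacent edges to colour red is essential. Once this case is handled, combining with the upper bound yields the claimed equality.
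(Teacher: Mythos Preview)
Your proof is correct and follows the same overall scheme as the paper: the upper bound via inequality~\eqref{e3}, and the lower bound by showing that every $5$-edge edge-ordered graph admits a coloring with neither a red $2K_2^{\prec}$ nor a blue $P_4^{\prec}$. The differences are in how the case analysis is organised. The paper splits first on whether $G^{\prec}$ contains an edge-ordered $P_4^{\prec}$, then enumerates all fifteen $5$-edge graphs by their longest path and, for each one other than $C_5$, colours two adjacent edges $v_1v_2,v_2v_3$ red. You instead split on $\Delta(F)\ge 3$ (star red), then on the presence of a triangle (triangle red), and finally list the nine triangle-free $\Delta\le 2$ graphs; this is a tidier reduction, though your ``e.g.'' for the eight non-$C_5$ cases leaves $C_4+K_2$ implicit (one red edge is not enough there, two adjacent $C_4$-edges are needed). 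The genuine difference is in the $C_5^{\prec}$ case: the paper fixes a monotone triple on the cycle and then does a subcase analysis on the positions of the remaining two edges for $P_4^{\prec_1}$, while you observe uniformly that choosing the middle blue edge to be the globally $\pi$-largest (for $P_4^{\prec_1}$ and $P_4^{\prec_3}$) or $\pi$-smallest (for $P_4^{\prec_2}$) forces the wrong rank pattern on the blue $P_4$. Your argument here is shorter and more conceptual; the paper's is more explicit but repetitive.
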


\begin{proof}
For the upper bound, by Inequality \eqref{e3}, we have 
$\hat{r}_{\text{edge}}(2K_2^{\prec}, P_4^{\prec}) \leq 2|E(P_4^{\prec})| \leq 6.$

For the lower bound, we show that every edge-ordered graph $G^{\prec}$ with $|E(G^{\prec})| = 5$ exists a red/blue coloring avoiding both a red edge-ordered $2K_2^{\prec}$ and a blue edge-ordered $P_4^{\prec}$. We first consider a special case.
\setcounter{case}{0}
\begin{case}
$G^{\prec}$ does not contain $P_4^{\prec}$ as an edge-ordered subgraph.     
\end{case}

Color all edges of $G^{\prec}$ blue. In this coloring, there are no red edges, so there cannot exist a red edge-ordered $2K_2^{\prec}$.
Since $G^{\prec}$ has no edge-ordered $P_4^{\prec}$ subgraph, there is no blue edge-ordered $P_4^{\prec}$.

\begin{case}
$G^{\prec}$ contains $P_4^{\prec}$ as an edge-ordered subgraph.    
\end{case}

We classify $G^{\prec}$ by the longest path in its underlying graph. Assume the longest path is $P_6$ (Figure (a)): Color edges $v_1v_2$ and $v_2v_3$ red, and the remaining edges blue. The red edges are adjacent (sharing $v_2$), so there is no red edge-ordered $2K_2^{\prec}$. The blue edges form $P_2^{\prec} \cup P_3^{\prec}$, which cannot be isomorphic to $P_4^{\prec}$.

Assume the longest path is $P_5$ or $P_4$. These graphs are shown in Figures (b)-(h) (longest path $P_5$) and (i)-(o) (longest path $P_4$). For Figures (c)-(o), color edges $v_1v_2$ and $v_2v_3$ red, and the remaining edges blue. The red edges are adjacent (sharing $v_2$), so there is no red $2K_2^{\prec}$. The underlying graph of the blue edges contains no $P_4$ subgraph, so there is no blue $P_4^{\prec}$.

\tikzset{
    fill-node/.style={fill, circle, inner sep=1.5pt},
    draw-node/.style={draw, circle, inner sep=1.5pt}
}

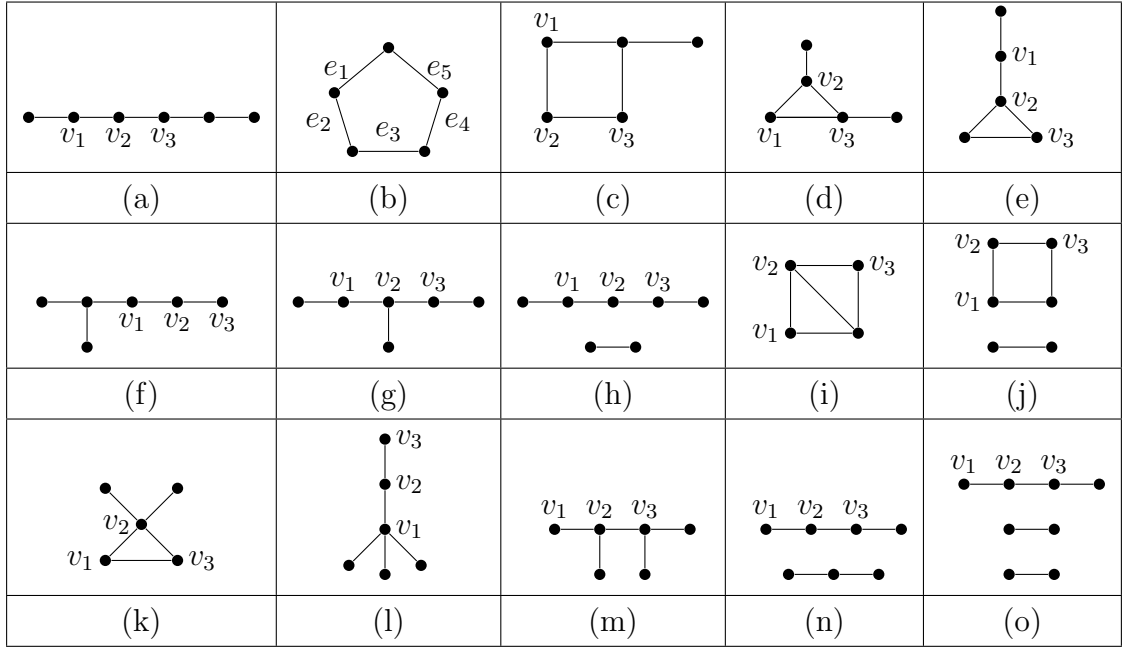
\begin{figure}[H]
    \centering
    \begin{tabular}{|c|c|c|c|c|}
    \hline
   \begin{tikzpicture}[scale=0.6] 
        \node[fill-node] (A) at (0,0) {};
        \node[fill-node] (B) at (1,0) {};
        \node[fill-node] (C) at (2,0) {};
        \node[fill-node] (D) at (3,0) {};
        \node[fill-node] (E) at (4,0) {};
        \node[fill-node] (F) at (5,0) {};
        \draw (A) -- (B) -- (C) -- (D)--(E) -- (F);
    \node[below] at (B) {$v_1$};
    \node[below] at (C) {$v_2$};
    \node[below] at (D) {$v_3$};
    \end{tikzpicture}
    &
    \begin{tikzpicture}[scale=0.6] 
        \node[fill-node] (A) at (0,1.5) {};
        \node[fill-node] (B) at (1.2,0.5) {};
        \node[fill-node] (C) at (0.8,-0.8) {};
        \node[fill-node] (D) at (-0.8,-0.8) {};
        \node[fill-node] (E) at (-1.2,0.5) {};
        \draw (A) -- (B) node[midway, right] {$e_5$};
        \draw (B) -- (C) node[midway, right] {$e_4$};
        \draw (C) -- (D) node[midway, above] {$e_3$};
        \draw (D) -- (E) node[midway, left] {$e_2$};
        \draw (E) -- (A) node[midway, left] {$e_1$};
    \end{tikzpicture}
    &
    \begin{tikzpicture}
    \node[fill-node] (v1) at (0,1) {};
    \node[fill-node] (v2) at (1,1) {};
    \node[fill-node] (v3) at (1,0) {};
    \node[fill-node] (v4) at (0,0) {};
    \node[fill-node] (v5) at (2,1) {};
    \draw (v1) -- (v2) -- (v3) -- (v4) -- (v1);
     \draw (v2) -- (v5);
    \node[above] at (v1) {$v_1$};
    \node[below] at (v4) {$v_2$};
    \node[below] at (v3) {$v_3$};
\end{tikzpicture}
    &
    \begin{tikzpicture}[scale=0.6] 
        \node[fill-node] (A) at (0,0) {};
        \node[fill-node] (B) at (-0.8,-0.8) {};
        \node[fill-node] (C) at (0.8,-0.8) {};
        \node[fill-node] (D) at (0,0.8) {};
        \node[fill-node] (E) at (2,-0.8) {};
        \draw (A) -- (B) -- (C) -- (A);
        \draw (A) -- (D);
         \draw (B) -- (E);
    \node[right] at (A) {$v_2$};
    \node[below] at (B) {$v_1$};
    \node[below] at (C) {$v_3$};
    \end{tikzpicture}
    &
    \begin{tikzpicture}[scale=0.6] 
        \node[fill-node] (A) at (0,0) {};
        \node[fill-node] (B) at (-0.8,-0.8) {};
        \node[fill-node] (C) at (0.8,-0.8) {};
        \node[fill-node] (D) at (0,1) {};
        \node[fill-node] (E) at (0,2) {};
        \draw (A) -- (B) -- (C) -- (A);
        \draw (A) -- (D) -- (E);
    \node[right] at (D) {$v_1$};
    \node[right] at (A) {$v_2$};
    \node[right] at (C) {$v_3$};
    \end{tikzpicture}
    \\
    \hline
    (a) & (b) & (c) & (d) & (e) \\
    \hline
    \begin{tikzpicture}[scale=0.6] 
        \node[fill-node] (A) at (0,0) {};
        \node[fill-node] (B) at (1,0) {};
        \node[fill-node] (C) at (2,0) {};
        \node[fill-node] (D) at (3,0) {};
        \node[fill-node] (E) at (1,-1) {};
         \node[fill-node] (F) at (4,0) {};
        \draw (A) -- (B) -- (C) -- (D)-- (F);
        \draw (B) -- (E);
    \node[below] at (C) {$v_1$};
    \node[below] at (D) {$v_2$};
    \node[below] at (F) {$v_3$};
    \end{tikzpicture}
    &
    \begin{tikzpicture}[scale=0.6] 
        \node[fill-node] (A) at (0,0) {};
        \node[fill-node] (B) at (1,0) {};
        \node[fill-node] (C) at (2,0) {};
        \node[fill-node] (D) at (3,0) {};
        \node[fill-node] (E) at (2,-1) {};
        \node[fill-node] (F) at (4,0) {};
        \draw (A) -- (B) -- (C) -- (D)-- (F);
        \draw (C) -- (E);
    \node[above] at (B) {$v_1$};
    \node[above] at (C) {$v_2$};
    \node[above] at (D) {$v_3$};
    \end{tikzpicture}
    &
    \begin{tikzpicture}[scale=0.6] 
        \node[fill-node] (A) at (0,0) {};
        \node[fill-node] (B) at (1,0) {};
        \node[fill-node] (C) at (2,0) {};
        \node[fill-node] (D) at (3,0) {};
        \node[fill-node] (E) at (1.5,-1) {};
        \node[fill-node] (F) at (2.5,-1) {};
        \node[fill-node] (G) at (4,0) {};
        \draw (A) -- (B) -- (C) -- (D)-- (G);
        \draw (E) -- (F);
    \node[above] at (B) {$v_1$};
    \node[above] at (C) {$v_2$};
    \node[above] at (D) {$v_3$};
    \end{tikzpicture}
    &
    \begin{tikzpicture}[scale=0.6] 
        \node[fill-node] (A) at (0,1.5) {};
        \node[fill-node] (B) at (1.5,1.5) {};
        \node[fill-node] (C) at (1.5,0) {};
        \node[fill-node] (D) at (0,0) {};
        \draw (A) -- (B) -- (C) -- (D) -- (A);
        \draw (A) -- (C);
    \node[left] at (D) {$v_1$};
    \node[left] at (A) {$v_2$};
    \node[right] at (B) {$v_3$};
    \end{tikzpicture}
    &
    \begin{tikzpicture}[scale=0.6] 
        \node[fill-node] (A) at (0,1.3) {};
        \node[fill-node] (B) at (1.3,1.3) {};
        \node[fill-node] (C) at (1.3,0) {};
        \node[fill-node] (D) at (0,0) {};
        \node[fill-node] (E) at (0,-1) {};
        \node[fill-node] (F) at (1.3,-1) {};
        \draw (A) -- (B) -- (C) -- (D) -- (A);
        \draw (E) -- (F);
    \node[left] at (D) {$v_1$};
    \node[left] at (A) {$v_2$};
    \node[right] at (B) {$v_3$};
    \end{tikzpicture}
    \\
    \hline
    (f) & (g) & (h) & (i) & (j) \\
    \hline
    \begin{tikzpicture}[scale=0.6] 
        \node[fill-node] (A) at (0,0) {};
        \node[fill-node] (B) at (-0.8,-0.8) {};
        \node[fill-node] (C) at (0.8,-0.8) {};
        \node[fill-node] (D) at (-0.8,0.8) {};
        \node[fill-node] (E) at (0.8,0.8) {};
        \draw (A) -- (B) -- (C) -- (A);
        \draw (D) -- (A);
        \draw (E) -- (A);
    \node[left] at (B) {$v_1$};
    \node[left] at (A) {$v_2$};
    \node[right] at (C) {$v_3$};
    \end{tikzpicture}
    &
    \begin{tikzpicture}[scale=0.6] 
        \node[fill-node] (A) at (0,0) {};
        \node[fill-node] (B) at (0,-1) {};
        \node[fill-node] (C) at (0,1) {};
        \node[fill-node] (D) at (0,2) {};
        \node[fill-node] (E) at (0.8,-0.8) {};
        \node[fill-node] (F) at (-0.8,-0.8) {};
        \draw (A) -- (B);
        \draw (A) -- (C) -- (D);
        \draw (A) -- (E);
        \draw (A) -- (F);
    \node[right] at (A) {$v_1$};
    \node[right] at (C) {$v_2$};
    \node[right] at (D) {$v_3$};
    \end{tikzpicture}
    &
    \begin{tikzpicture}[scale=0.6] 
        \node[fill-node] (A) at (0,0) {};
        \node[fill-node] (B) at (1,0) {};
        \node[fill-node] (C) at (2,0) {};
        \node[fill-node] (D) at (1,-1) {};
        \node[fill-node] (E) at (2,-1) {};
         \node[fill-node] (F) at (3,0) {};
        \draw (A) -- (B) -- (C)-- (F);
        \draw (B) -- (D);
        \draw (C) -- (E);
    \node[above] at (A) {$v_1$};
    \node[above] at (B) {$v_2$};
    \node[above] at (C) {$v_3$};
    \end{tikzpicture}
    &
    \begin{tikzpicture}[scale=0.6] 
        \node[fill-node] (A) at (0,0) {};
        \node[fill-node] (B) at (1,0) {};
        \node[fill-node] (C) at (2,0) {};
        \node[fill-node] (D) at (3,0) {};
        \node[fill-node] (E) at (0.5,-1) {};
        \node[fill-node] (F) at (1.5,-1) {};
        \node[fill-node] (G) at (2.5,-1) {};
        \draw (A) -- (B) -- (C) -- (D);
        \draw (E) -- (F)-- (G);
    \node[above] at (A) {$v_1$};
    \node[above] at (B) {$v_2$};
    \node[above] at (C) {$v_3$};
    \end{tikzpicture}
    &
    \begin{tikzpicture}[scale=0.6] 
        \node[fill-node] (A) at (0,0) {};
        \node[fill-node] (B) at (1,0) {};
        \node[fill-node] (C) at (2,0) {};
        \node[fill-node] (D) at (3,0) {};
        \node[fill-node] (E) at (1,-1) {};
        \node[fill-node] (F) at (2,-1) {};
        \node[fill-node] (G) at (1,-2) {};
        \node[fill-node] (H) at (2,-2) {};
        \draw (A) -- (B) -- (C) -- (D);
        \draw (E) -- (F);
        \draw (G) -- (H);
    \node[above] at (A) {$v_1$};
    \node[above] at (B) {$v_2$};
    \node[above] at (C) {$v_3$};
    \end{tikzpicture}
    \\
    \hline
    (k) & (l) & (m) & (n) & (o) \\
    \hline
    \end{tabular}
    \caption{Some graphs with $5$ edges}
    \label{fig:graphs}
\end{figure}

Note that $G^{\prec} = C_5^{\prec}$ (Figure (b)) contains $P_4^{\prec}$. We analyze this case based on the edge-order of $P_4^{\prec}$. Assume that $P_4^{\prec}=P_4^{\prec_1}$. Without loss of generality, let $e_1 \prec e_2 \prec e_3$. If $e_4 \prec e_3$, color edges $e_1$ and $e_5$ red, and the remaining edges blue. The blue edges satisfy the edge-order $e_2 \prec e_3$ and $e_4 \prec e_3$. If $e_3 \prec e_4$ and $e_5 \prec e_4$, color edges $e_1$ and $e_2$ red, and the remaining edges blue. The blue edges satisfy the edge-order $e_3 \prec e_4$ and $e_5 \prec e_4$. If $e_3 \prec e_4 \prec e_5$, color edges $e_3$ and $e_4$ red, and the remaining edges blue. The blue edges satisfy the edge-order $e_1 \prec e_2 \prec e_5$.

Assume that $P_4^{\prec}=P_4^{\prec_2}$. Without loss of generality, let $e_1 \prec e_3 \prec e_2$ (where the middle edge $e_2$ has the maximum order). Color edges $e_1$ and $e_5$ red, and the remaining edges blue. Note that $e_3 \prec e_2$, so the middle edge $e_3$ does not have the maximum order in the blue subgraph.

Similarly, assume that $P_4^{\prec}=P_4^{\prec_3}$ . Without loss of generality, let $e_2 \prec e_1 \prec e_3$ (where the middle edge $e_1$ has the minimum order). Color edges $e_1$ and $e_5$ red, and the remaining edges blue. Note that $e_2 \prec e_3$, so the middle edge $e_1$ does not have the minimum order in the blue subgraph.

In all the above colorings, there is no red edge-ordered $2K_2^{\prec}$ and no blue edge-ordered $P_4^{\prec}$. Thus, $\hat{r}_{\text{edge}}(2K_2^{\prec}, P_4^{\prec}) \geq 6$.
\end{proof}

Using Lemma \ref{le-3-1} as the base case, we generalize to arbitrary $m$ via induction, establishing linearity for $m K_2^{\prec}$ and edge-ordered $P_4^{\prec}$.

\begin{theorem}\label{th-3-1}
For any integer $m \geq 2$ and any edge-ordered $P_4^{\prec}\in \{P_4^{\prec_1}, P_4^{\prec_2},P_4^{\prec_3}\}$, we have 
$$\hat{r}_{\text{edge}}(mK_2^{\prec}, P_4^{\prec}) = 3m.$$       
\end{theorem}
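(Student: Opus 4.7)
The upper bound $\hat{r}_{\text{edge}}(mK_2^{\prec}, P_4^{\prec}) \leq 3m$ is immediate from Proposition \ref{prop-basic-ineq}: substituting $H^{\prec} = P_4^{\prec}$ into (\ref{e3}) gives $\hat{r}_{\text{edge}}(mK_2^{\prec}, P_4^{\prec}) \leq m \cdot |E(P_4^{\prec})| = 3m$. The substance of the theorem is the matching lower bound $\hat{r}_{\text{edge}}(mK_2^{\prec}, P_4^{\prec}) \geq 3m$, which I plan to prove by induction on $m$, with base case $m=2$ supplied by Lemma \ref{le-3-1}.

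For the inductive step, assuming the statement at level $m-1$ (every edge-ordered graph on at most $3m-4$ edges admits a $2$-coloring with no red edge-ordered $(m-1)K_2^{\prec}$ and no blue edge-ordered $P_4^{\prec}$), I would argue that any edge-ordered graph $G^{\prec}$ on $3m-1$ edges admits a $2$-coloring with no red $mK_2^{\prec}$ and no blue $P_4^{\prec}$. The plan is to peel off a three-edge subgraph $H \subseteq G^{\prec}$, apply the inductive hypothesis to $G^{\prec}\setminus H$ (which has $3m-4 = 3(m-1)-1$ edges), and then extend the colouring to $H$ so that the red matching number grows by at most $1$ (reaching $m-1$) and no new blue edge-ordered $P_4^{\prec}$ is produced. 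The clean choices for $H$ are: a $P_4$-component (colour the middle edge red and the two end edges blue, which are vertex-disjoint and thus cannot extend to a $P_4$); a $K_3$- or $K_{1,3}$-component (colour every edge blue, since neither underlying graph contains $P_4$); or a union of smaller components (isolated edges and/or a $P_3$) whose edges total exactly $3$ (colour every edge blue). In each such case $H$ is vertex-disjoint from $G^{\prec}\setminus H$, so any blue $P_4^{\prec}$ would have to lie entirely inside one of the two subgraphs, both of which are ruled out by construction.

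The main obstacle is the residual case in which every connected component of $G^{\prec}$ has at least $4$ edges, so none of the above clean choices is available and $H$ must be extracted from inside a single component $C$. The blue edges of such an $H$ may share vertices with blue edges returned by the inductive hypothesis, and their combination can, in principle, create a blue edge-ordered $P_4^{\prec}$ of target type. I would handle this case in the spirit of the $C_5$-argument inside the proof of Lemma \ref{le-3-1}: the target type $P_4^{\prec}\in\{P_4^{\prec_1},P_4^{\prec_2},P_4^{\prec_3}\}$ is distinguished by the relative rank of its middle edge (monotone, middle-largest, or middle-smallest), and I would locate a $P_4$ inside $C$ and choose which of its three edges to colour red---using the given edge-ordering---so that the order induced on any blue $P_4^{\prec}$ that threads through $H$ cannot match the target type. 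The necessity of this ordering-based step is confirmed by the existence of unordered $3m-1$-edge examples such as $C_5 \cup (m-2)P_4$ that already force the unordered analogue $(mK_2, P_4)$, so Proposition \ref{prop-1} cannot close the gap; the extra unit in the edge-ordered lower bound is delivered precisely by the edge-ordering.
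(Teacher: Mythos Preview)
Your inductive framework---upper bound from (\ref{e3}), lower bound by induction on $m$ with Lemma \ref{le-3-1} as the base---matches the paper's, but your inductive step peels off three \emph{edges} whereas the paper deletes \emph{vertices}, and that difference is exactly where the gap lies. The paper's key device (Claim 1 in its proof) is that for any vertex $v$ of a minimal Ramsey graph $G^{\prec}$ one has $G^{\prec}-v \to ((m-1)K_2^{\prec},P_4^{\prec})$: otherwise, extending a bad colouring of $G^{\prec}-v$ by making every edge at $v$ red raises the red matching by at most one (all the new red edges share $v$) and introduces \emph{no blue edge whatsoever}, so no blue $P_4^{\prec}$ can possibly be created. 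This sidesteps the interaction problem entirely and immediately disposes of any vertex of degree $\ge 3$; the remaining case $\Delta(G^{\prec})\le 2$ is a disjoint union of paths and cycles, handled by further vertex deletions (Claim 2) and, in the terminal all-$C_5$ subcase, by directly reusing the colouring from Lemma \ref{le-3-1}.

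In your residual case, by contrast, the two blue edges of $H$ live inside a larger component and may be adjacent to blue edges produced by the black-box inductive colouring of $G^{\prec}\setminus H$, over which you have no control. Concretely, if a component is a $C_5$ with cyclic edges $f_1,\dots,f_5$ and $H=\{f_1,f_2,f_3\}$, the inductive colouring is free to make $f_4,f_5$ both blue (they form only a $P_3$); then whichever single $f_i\in H$ you redden, the four remaining blue edges form a $P_5$ on the cycle containing \emph{two} overlapping blue $P_4$'s, and you would need both of them to miss the target pattern simultaneously. Your one degree of freedom, chosen from the edge-ordering alone before you even see the inductive colouring, is not shown to achieve this across all orderings and all three target types---the sentence ``so that the order induced on any blue $P_4^{\prec}$ that threads through $H$ cannot match the target type'' states the desired conclusion without supplying the mechanism. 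A smaller lacuna: your case split misses a graph having a single one- or two-edge component while all other components have at least four edges, which falls under neither your ``clean choices'' nor your residual hypothesis.
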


\begin{proof}
For the upper bound, we have $\hat{r}_{\text{edge}}(mK_2^{\prec}, P_4^{\prec}) \leq 3m$ by (\ref{e3}). For the lower bound, let 
$$
\mathcal{R}_{\text{edge}}(mK_2^{\prec}, P_4^{\prec}) = \{ G^{\prec} : G^{\prec} \to (mK_2^{\prec}, P_4^{\prec}) \text{ but } G'^{\prec} \nrightarrow (mK_2^{\prec}, P_4^{\prec}) \text{ for any } G'^{\prec} \varsubsetneq G^{\prec} \}.
$$
Take any minimal Ramsey subgraph $G^{\prec} \in \mathcal{R}_{\text{edge}}(mK_2^{\prec}, P_4^{\prec})$. We prove $|E(G^{\prec})| \geq 3m$ by induction on $m$.
For $m=2$, this follows directly from Lemma \ref{le-3-1}. Assume that it holds for $m-1$, then we consider the case for $m$ below. 

\begin{claim}\label{c1}
For any vertex $v \in V(G^{\prec})$, we have $G^{\prec} - v \to ((m-1)K_2^{\prec}, P_4^{\prec})$ and $|E(G^{\prec} - v)| \geq 3(m-1)$.  
\end{claim}

\begin{proof}
Assume for contradiction that there exists a vertex $v \in V(G^{\prec})$ such that $G^{\prec} - v \nrightarrow ((m-1)K_2^{\prec}, P_4^{\prec})$, that is, there exists a red/blue coloring $\chi$ of $G^{\prec} - v$ such that $G^{\prec} - v$ contains neither a red edge-ordered $(m-1)K_2^{\prec}$ nor a blue edge-ordered $P_4^{\prec}$.

Extend $\chi$ to a coloring of $G^{\prec}$ by coloring all edges incident to $v$ red. We verify that this extension avoids both forbidden subgraphs. Since $\chi$ contains no blue edge-ordered $P_4^{\prec}$ in $G^{\prec} - v$, and all edges incident to $v$ are red, it follows that there is no blue edge-ordered $P_4^{\prec}$ in $G^{\prec}$.
Furthermore, $\chi$ contains at most $m-2$ pairwise disjoint red edges in $G^{\prec} - v$. The red edges incident to $v$ all share the vertex $v$, so they cannot form additional pairwise disjoint red edges with those in $G^{\prec} - v$. Thus, $G^{\prec}$ contains at most $m-1$ pairwise disjoint red edges. This contradicts the assumption that $G^{\prec} \to (mK_2^{\prec}, P_4^{\prec})$. 

Let $H^{\prec}$ be a minimal Ramsey subgraph of $G^{\prec} - v$ for $((m-1)K_2^{\prec}, P_4^{\prec})$. By the induction hypothesis, $|E(H^{\prec})| \geq 3(m-1)$. Since $H^{\prec} \subseteq G^{\prec} - v$, we have $|E(G^{\prec} - v)| \geq |E(H^{\prec})| \geq 3(m-1)$. Hence, Claim \ref{c1} holds.
\end{proof}

\setcounter{case}{0} 
\begin{case}
There exists a vertex $v \in V(G^{\prec})$ with $d_{G^{\prec}}(v) \geq 3$.    
\end{case}

By Claim \ref{c1}, $|E(G^{\prec} - v)| \geq 3(m-1)$. Thus,
$$
|E(G^{\prec})| = d_{G^{\prec}}(v) + |E(G^{\prec} - v)| \geq 3 + 3(m-1) = 3m.
$$

\begin{case}
Every vertex $v \in V(G^{\prec})$ has $d_{G^{\prec}}(v) \leq 2$.    
\end{case}

If every vertex $v \in V(G^{\prec})$ has $d_{G^{\prec}}(v) \leq 2$, then $G^{\prec}$ is a disjoint union of edge-ordered paths and cycles. Since $G^{\prec} \to (mK_2^{\prec}, P_4^{\prec})$, it follows that each connected component of $G^{\prec}$ must contain $P_4^{\prec}$ as an edge-ordered subgraph. Otherwise, suppose some components do not contain $P_4^{\prec}$ as an edge-ordered subgraph. Color all edges of these components blue, and color the edges of the remaining components arbitrarily. In this coloring, any red edge-ordered $mK_2^{\prec}$ or blue edge-ordered $P_4^{\prec}$ must lie entirely within the remaining components (as the blue-colored components contain no $P_4^{\prec}$). However, the subgraph induced by these remaining components (denoted $G'^{\prec}$) satisfies $G'^{\prec} \varsubsetneq G^{\prec}$, which contradicts the minimality assumption that $G'^{\prec} \nrightarrow (mK_2^{\prec}, P_4^{\prec})$ for all $G'^{\prec} \varsubsetneq G^{\prec}$. We analyze the components by their type below.

\begin{claim}\label{c2}
Let any edge-ordered $P_3^{\prec}$, say $V(P_3^{\prec}) = \{u_1, u_2, u_3\}$. If $G^{\prec} - v = P_3^{\prec} \cup (G^{\prec} - \{v, u_2\})$ and $G^{\prec} - v \to ((m-1)K_2^{\prec}, P_4^{\prec})$ for some vertex $v \in V(G^{\prec})$, then $G^{\prec} - \{v, u_2\} \to ((m-1)K_2^{\prec}, P_4^{\prec})$.   
\end{claim}

\begin{proof}
Assume, for contradiction, that there exists a 2-coloring $\chi'$ of $G^{\prec} - \{v, u_2\}$ such that $G^{\prec} - \{v, u_2\} \nrightarrow ((m-1)K_2^{\prec}, P_4^{\prec})$. Color the edges $u_1u_2$ and $u_2u_3$ blue, and keep the coloring of $\chi'$ in $G^{\prec} - \{v, u_2\}$ unchanged. Since $G^{\prec} - \{v, u_2\} \nrightarrow ((m-1)K_2^{\prec}, P_4^{\prec})$ under $\chi'$, it follows that $P_3^{\prec} \cup (G^{\prec} - \{v, u_2\}) \nrightarrow ((m-1)K_2^{\prec}, P_4^{\prec})$. Since $G^{\prec} - v = P_3^{\prec} \cup (G^{\prec} - \{v, u_2\})$, it follows that $G^{\prec} - v \nrightarrow ((m-1)K_2^{\prec}, P_4^{\prec})$, a contradiction.
\end{proof}

\begin{subcase}
$G^{\prec}$ contains a path component $P_n^{\prec}$ for $n \geq 4$.    
\end{subcase}

Let $V(P_n^{\prec}) = \{v_1, v_2, v_3, \ldots, v_n\}$. By Claim \ref{c1}, we have $G^{\prec} - v_3 \to ((m-1)K_2^{\prec}, P_4^{\prec})$. Note that $G^{\prec} - v_3 = (G^{\prec} - \{v_2, v_3\}) \cup \{v_1v_2\}$. Using the same argument as in Claim \ref{c2}, we obtain $G^{\prec} - \{v_2, v_3\} \to ((m-1)K_2^{\prec}, P_4^{\prec})$. By Claim \ref{c1}, it follows that $|E(G^{\prec} - \{v_2, v_3\})| \geq 3(m-1)$. Therefore,
$$
|E(G^{\prec})| = d_{G^{\prec}}(v_2) + d_{G^{\prec}}(v_3) - 1 + |E(G^{\prec} - \{v_2, v_3\})| \geq 3 + 3(m-1) = 3m.
$$

\begin{subcase}
 $G^{\prec}$ is a graph composed of edge-ordered cycles, that is, $G^{\prec} = C_{n_1}^{\prec_1} \cup C_{n_2}^{\prec_2} \cup \ldots \cup C_{n_k}^{\prec_k}$, then $n_i \geq 4$ for each $1 \leq i \leq k$.    
\end{subcase}

Suppose that there exists some $i$ with $1 \leq i \leq k$ such that $n_i = 4$. Let $V(C_4^{\prec}) = \{v_1, v_2, v_3, v_4\}$. By Claim \ref{c1}, we have $G^{\prec} - v_1 \to ((m-1)K_2^{\prec}, P_4^{\prec})$. Note that $G^{\prec} - v_1 = \{v_2v_3, v_3v_4\} \cup (G^{\prec} - \{v_2, v_3\})$. By Claim \ref{c2}, it follows that $G^{\prec} - \{v_2, v_3\} \to ((m-1)K_2^{\prec}, P_4^{\prec})$. By Claim \ref{c1}, we have $|E(G^{\prec} - \{v_2, v_3\})| \geq 3(m-1)$. Thus,
$$
|E(G^{\prec})| = d_{G^{\prec}}(v_1) + d_{G^{\prec}}(v_3) + |E(G^{\prec} - \{v_2, v_3\})| \geq 4 + 3(m-1) > 3m.
$$

Suppose that there exists some $i$ with $1 \leq i \leq k$ such that $n_i \geq 6$. Let $V(C_{n_i}^{\prec}) = \{v_1, v_2, \ldots, v_{n_i}\}$. By Claim \ref{c1}, we have $G^{\prec} - v_{n_i} \to ((m-1)K_2^{\prec}, P_4^{\prec})$. Similarly, by Claim \ref{c1}, we have $G^{\prec} - \{v_{n_i}, v_4\} \to ((m-2)K_2^{\prec}, P_4^{\prec})$. Note that $G^{\prec} - \{v_{n_i}, v_4\} = \{v_1v_2, v_2v_3\} \cup (G^{\prec} - \{v_{n_i}, v_4, v_2\})$. Therefore, by Claim \ref{c2}, it follows that $G^{\prec} - \{v_{n_i}, v_4, v_2\} \to ((m-2)K_2^{\prec}, P_4^{\prec})$. By Claim \ref{c1}, we have $|E(G^{\prec} - \{v_{n_i}, v_4, v_2\})| \geq 3(m-2)$, and hence
$$
|E(G^{\prec})| = d_{G^{\prec}}(v_{n_i}) + d_{G^{\prec}}(v_4) + d_{G^{\prec}}(v_2) + |E(G^{\prec} - \{v_{n_i}, v_4, v_2\})| \geq 6 + 3(m-2) = 3m.
$$

If $n_i = 5$ for each $i$ ($1 \leq i \leq k$), then $G^{\prec} = \underbrace{C_5^{\prec_1} \cup \ldots \cup C_5^{\prec_k}}_k$. Assume that $|E(G^{\prec})| = 3m - 1$. Then $5k = 3m - 1$.

Color each $C_5^{\prec_i}$ according to the coloring in Lemma \ref{le-3-1} (see Figure (b)) such that $C_5^{\prec_i} \nrightarrow (2K_2^{\prec}, P_4^{\prec})$, where $1 \leq i \leq k$. Under this coloring, there are at most $k$ pairwise disjoint red edges and no blue edge-ordered $P_4^{\prec}$. Since $k = \frac{3m - 1}{5} < m$ for $m \geq 2$, it follows that $G^{\prec} \nrightarrow (mK_2^{\prec}, P_4^{\prec})$, a contradiction. Thus, $|E(G^{\prec})| \geq 3m$, as desired.
\end{proof}

\subsection{Small matchings versus complete bipartite graph}
\indent
\par

Using classical size-Ramsey results for bipartite graphs, we establish near-linearity for $2K_2^{\prec}$ and $K_{s,t}^{\prec}$ in terms of $t$.

\begin{theorem}{\upshape \cite{EF81}}\label{EF81}
For any integer $m \geq 1$ and $t\geq s \geq 2$, we have $\hat{r}(2 K_2,K_{s,t})=st+s+t$.
\end{theorem}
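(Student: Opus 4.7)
The plan is to prove the two bounds separately via a clean structural reduction, followed by an extremal construction for the upper bound and a counting/structural argument for the lower bound.

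First I would establish the key reduction. Since $2K_2$ consists of two disjoint edges, any 2-coloring of $E(F)$ with no red $2K_2$ must have pairwise-intersecting red edges; in the absence of a triangle (which will be the case for our extremal example), this forces the red edges to lie in a star at some vertex $v$, so that the blue subgraph contains $F - v$. Hence $F \to (2K_2, K_{s,t})$ holds whenever $F - v \supseteq K_{s,t}$ for every $v \in V(F)$, and conversely (for any $F$) this condition is necessary, since if some $F - v$ misses $K_{s,t}$ we may color all edges at $v$ red and the rest blue. This converts the size-Ramsey question into the extremal question: what is the minimum number of edges in a graph $F$ such that removing any single vertex still leaves a copy of $K_{s,t}$?

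For the upper bound, I would take $F = K_{s+1,t+1} - e$ for an arbitrary edge $e = u_0 v_0$, so that $|E(F)| = (s+1)(t+1) - 1 = st + s + t$. A short case check on the position of a deleted vertex $v$ shows that $F - v$ always contains $K_{s,t}$: if $v \in \{u_0, v_0\}$ then $F - v$ is the complete bipartite graph $K_{s,t+1}$ or $K_{s+1,t}$, and otherwise $F - v$ is of the form $K_{s,t+1} - e$ or $K_{s+1,t} - e$, each of which visibly contains $K_{s,t}$. Since $F$ is bipartite, no triangle can serve as an alternative $2K_2$-free red subgraph, so the reduction applies directly and $\hat{r}(2K_2, K_{s,t}) \leq st + s + t$.

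For the lower bound, suppose $F$ is an edge-minimal graph with $F - v \supseteq K_{s,t}$ for every $v \in V(F)$; the aim is to show $|E(F)| \geq st + s + t$. Fix one copy $K \subseteq F$ of $K_{s,t}$ with parts $A$ (size $s$) and $B$ (size $t$), contributing a base of $st$ edges. For each of the $s + t$ vertices $v \in A \cup B$, the hypothesis gives a witness copy of $K_{s,t}$ inside $F - v$, which either uses a new vertex outside $A \cup B$ (creating edges to the remaining vertices of $K$) or reshuffles the bipartition of $K$ using existing vertices (requiring additional cross-edges within $A \cup B$). Either way, at least one edge outside $E(K)$ is forced; summing across the $s + t$ choices of $v$ and arguing that these new edges cannot all coincide should yield at least $s + t$ further edges beyond the base $st$.

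The main obstacle is making this accounting of new edges airtight without double-counting, since different witness copies can share many edges and a naive count can easily undershoot $s + t$. A cleaner route I would pursue is to prove a structural dichotomy: either $F$ already contains $K_{s+1,t+1}$, giving at least $(s+1)(t+1) > st+s+t$ edges, or (by edge-minimality) $F$ is essentially bipartite with parts of sizes $s+1$ and $t+1$ and is obtained from $K_{s+1,t+1}$ by deleting edges; one then shows that at most one edge can be removed while preserving the vertex-deletion property, matching the extremal example $K_{s+1,t+1} - e$ exactly. I expect the classification of which edges of $K_{s+1,t+1}$ may be missing, together with ruling out non-bipartite minimum examples, to be the hardest part of the proof.
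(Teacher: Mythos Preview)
This theorem is quoted from Erd\H{o}s--Faudree \cite{EF81} and is not proved in the paper; it is simply invoked as a known result to feed into Theorem~\ref{th-3-3}. So there is no ``paper's proof'' to compare your attempt to.

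On the merits of your sketch: the upper bound via $F=K_{s+1,t+1}-e$ is correct, and your necessary condition for the lower bound (if $F\to(2K_2,K_{s,t})$ then $F-v\supseteq K_{s,t}$ for every $v$, via the star colouring) is also correct. The real problem is your proposed ``structural dichotomy'' for finishing the lower bound. Consider $F=K_{s,t}\ast\{w\}$, the graph obtained from a fixed $K_{s,t}$ with parts $A,B$ by adding a new vertex $w$ adjacent to every vertex of $A\cup B$. Then $|E(F)|=st+s+t$, and one checks easily that $F-v\supseteq K_{s,t}$ for every $v$ (for $v\in A$ use parts $(A\setminus\{v\})\cup\{w\}$ and $B$; symmetrically for $v\in B$; and $F-w=K_{s,t}$). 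This $F$ is edge-minimal for the vertex-deletion property, yet it is \emph{not} bipartite (it contains triangles $w,a,b$) and it does \emph{not} contain $K_{s+1,t+1}$ (it has only $s+t+1$ vertices). So your dichotomy ``either $F\supseteq K_{s+1,t+1}$ or $F$ is essentially a subgraph of $K_{s+1,t+1}$'' is false as stated, and the classification route you outline cannot work without modification.

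Your first idea---accounting for at least one new edge per deleted vertex $v\in A\cup B$ and controlling overcounting---is closer to what Erd\H{o}s and Faudree actually do, but as you anticipated, making that count reach $s+t$ rather than something smaller requires a genuinely careful argument; the sketch as written does not yet supply one.
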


Leveraging Theorem \ref{EF81} and Proposition \ref{prop-1}, we extend this result to the edge-ordered setting.

\begin{theorem}\label{th-3-3}
For any integers $t \geq s \geq 2$, we have $\hat{r}_{\text{edge}}(2K_2^{\prec}, K_{s,t}^{\prec}) = st + s + t$.
\end{theorem}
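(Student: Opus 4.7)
The plan is to prove the two bounds separately. For the lower bound, inequality~\eqref{e1} together with Theorem~\ref{EF81} gives
$\hat{r}_{\text{edge}}(2K_2^{\prec}, K_{s,t}^{\prec}) \geq \hat{r}(2K_2, K_{s,t}) = st + s + t$.
Note that Proposition~\ref{prop-1} does not apply in general, because $\hat{r}(2K_2, K_{s,t}) = st + s + t$ is strictly smaller than $2|E(K_{s,t})| = 2st$ whenever $(s,t) \neq (2,2)$, so a direct construction is needed for the matching upper bound.

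For the upper bound, I would exhibit an explicit host graph. Let $A = \{a_0, a_1, \ldots, a_s\}$ and $B = \{b_0, b_1, \ldots, b_t\}$, and take $F = K_{s+1,t+1} - a_0 b_0$, the complete bipartite graph on $(A,B)$ with the single edge $a_0 b_0$ removed, so that $|E(F)| = (s+1)(t+1) - 1 = st + s + t$. Equip $F$ with the lexicographic edge order $\prec$ on index pairs: $(a_i, b_j) \prec (a_k, b_l)$ iff $i < k$, or $i = k$ and $j < l$. The claim is that $F^{\prec} \to (2K_2^{\prec}, K_{s,t}^{\prec})$. Given any $2$-coloring containing no red edge-ordered $2K_2^{\prec}$, no two red edges are vertex-disjoint; since $F$ is bipartite and hence triangle-free, all red edges share a common endpoint $v$, and every edge of $F - v$ is therefore blue. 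It remains to locate a blue edge-ordered copy of $K_{s,t}^{\prec}$ inside $F - v$.

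The case analysis over $v \in A \cup B$ is routine: in each case $F - v$ has one part of size at least $s$ and another of size at least $t$, and by dropping the appropriate extra vertex---choosing $a_0$ or $b_0$ from the over-sized side to avoid the single missing edge $a_0 b_0$---one obtains a $K_{s,t}$-subgraph whose vertex index sets $A^{\prime} \subseteq \{0, 1, \ldots, s\}$ and $B^{\prime} \subseteq \{0, 1, \ldots, t\}$ have sizes $s$ and $t$, respectively. The main point to verify---and the only genuine subtlety, where I expect the hard work to lie---is that the edge order on this $K_{s,t}$ inherited from $F^{\prec}$ matches $\prec_{K_{s,t}}$. This holds because lexicographic orders are closed under restriction followed by order-preserving relabeling: if $\sigma \colon A^{\prime} \to \{1, \ldots, s\}$ and $\tau \colon B^{\prime} \to \{1, \ldots, t\}$ are the unique order-preserving bijections, then the $F^{\prec}$-inherited order on the bipartite edges of the chosen $K_{s,t}$ coincides with the lex order on the relabeled pairs $(\sigma(k), \tau(l))$, which is precisely $\prec_{K_{s,t}}$. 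Combining this with the lower bound yields the claimed equality.
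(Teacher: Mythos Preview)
Your proposal is correct and follows essentially the same approach as the paper: the lower bound via inequality~\eqref{e1} and Theorem~\ref{EF81}, and the upper bound via the host $K_{s+1,t+1}^{\prec}$ with one edge deleted, arguing that the red edges form a star and then extracting a blue lex-ordered $K_{s,t}^{\prec}$ by removing the star center together with the appropriate endpoint of the deleted edge. Your justification that the red edges share a common vertex (via triangle-freeness) and your explicit verification that the inherited lex order matches $\prec_{K_{s,t}}$ are slightly more careful than the paper's corresponding one-line remarks, but the argument is the same.
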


\begin{proof}
By (\ref{e1}) and Theorem \ref{EF81}, we have $\hat{r}_{\text{edge}}(2K_2^{\prec}, K_{s,t}^{\prec}) \geq \hat{r}(2K_2, K_{s,t}) = st + s + t$. For the upper bound, let $F^{\prec} = K_{s+1,t+1}^{\prec} - \{u_1v_1\}$ with two partite sets $A$ and $B$ where $V(A) = \{u_1, \ldots, u_{s+1}\}$ and $V(B) = \{v_1, \ldots, v_{t+1}\}$.

Assume there is no red edge-ordered $2K_2^{\prec}$ and we need to show there exists a blue edge-ordered $K_{s,t}^{\prec}$. Since there is no red edge-ordered $2K_2^{\prec}$, the red subgraph must be a star (all red edges share a common center, otherwise, two edges would be disjoint).

If the center of this red star is in $A$, namely $u_i$ for some $1 \leq i \leq s+1$, then the subgraph induced by $A - \{u_i\}$ and $B - \{v_1\}$ is a blue edge-ordered $K_{s,t}^{\prec}$, and inherits the vertex lexicographical order. Similarly, if the center of this red star is in $B$, namely $v_j$ for some $1 \leq j \leq t+1$, then the subgraph induced by $A - \{u_1\}$ and $B - \{v_j\}$ is a blue edge-ordered $K_{s,t}^{\prec}$.

Thus, $\hat{r}_{\text{edge}}(2K_2^{\prec}, K_{s,t}^{\prec}) \leq st + s + t$. Combining the upper and lower bounds, we conclude $\hat{r}_{\text{edge}}(2K_2^{\prec}, K_{s,t}^{\prec}) = st + s + t$.
\end{proof}

Finally, we generalize to arbitrary $m$ for $K_{s, t}^{\prec}$, establishing tight bounds that confirm linearity in specific cases.

\begin{theorem}{\upshape \cite{FSS80}}\label{FSS80}
For any integers $m \geq 1$ and $t \geq s \geq 1$, we have 
$$r(mK_2, K_{s,t}) = \max \{m + s + t - 1, 2m + s - 1\}.$$
\end{theorem}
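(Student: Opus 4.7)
I would establish the two inequalities separately.

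For the lower bound $r(mK_2,K_{s,t})\ge\max\{m+s+t-1,\,2m+s-1\}$, the plan is to exhibit, for each of the two terms, a critical $2$-coloring of a complete graph on one fewer vertex that avoids both structures. For the first term I $2$-color $K_{m+s+t-2}$: partition the vertex set into $A$ of size $m-1$ and $B$ of size $s+t-1$, color red every edge meeting $A$, and color blue every edge inside $B$. Every red edge has an endpoint in $A$, so the red matching number is at most $m-1$, and the blue graph is $K_{s+t-1}$, a clique too small to contain $K_{s,t}$. For the second term I $2$-color $K_{2m+s-2}$: partition into $A$ of size $s-1$ and $B$ of size $2m-1$, color red the edges inside $B$ and blue all other edges; the red graph is $K_{2m-1}$ with matching number $m-1$, and any blue $K_{s,t}$ would have to keep one of its sides out of $B$ (since edges inside $B$ are red), forcing that side into $A$ of size $s-1<s\le t$, a contradiction.

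For the upper bound, I set $N=\max\{m+s+t-1,\,2m+s-1\}$ and consider an arbitrary $2$-coloring of $K_N$ with no red $mK_2$. Let $M$ be a maximum red matching with $|M|=\nu\le m-1$, write $U=V(M)$, $W=V\setminus U$, so $|W|=N-2\nu$. Since any red edge inside $W$ would enlarge $M$, the set $W$ induces a blue clique of size at least $s+1$ (using $N\ge 2m+s-1$). If $|W|\ge s+t$ we are done, so assume $|W|\le s+t-1$. I then exploit the maximality of $M$ further: for each matched pair $\{u,u'\}\in M$, distinct red neighbors of $u,u'$ in $W$ would allow swapping $uu'$ for two independent red edges and enlarging $M$, so at least one of $u,u'$ has at most one red neighbor in $W$. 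Setting $U^{\ast}=\{x\in U:|N_R(x)\cap W|\le 1\}$, this gives $|U^{\ast}|\ge\nu$. The plan is now to pick a subset $S\subseteq W$ of size $s$: the $|W|-s$ vertices of $W\setminus S$ are automatic common blue neighbors of $S$, and a vertex of $U^{\ast}$ joins them whenever its (at most one) red neighbor in $W$ avoids $S$. Showing the total number of common blue neighbors exceeds $t$ for some $S$ reduces, after an averaging step, to $|W|+\nu\ge s+t$, equivalently $N-\nu\ge s+t$, which follows from $\nu\le m-1$ and $N\ge m+s+t-1$.

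The main obstacle lies in pairs $(u,u')$ with $N_R(u)\cap W=N_R(u')\cap W=\{w\}$ for a common $w\in W$: both vertices belong to $U^{\ast}$ but both are simultaneously ruined whenever $w\in S$. Controlling these ``coinciding antennae'' requires either an expectation argument over the random choice of $S\subseteq W$, or a Gallai--Edmonds-type structural refinement showing that too many such pairs pointing to the same $w$ would create an augmenting path in the red graph. This is the step in which the two simultaneous lower bounds $m+s+t-1$ and $2m+s-1$ must both be invoked, and is the technical heart of the theorem.
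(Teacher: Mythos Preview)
The paper does not prove this theorem; it is quoted from Faudree, Schelp and Sheehan \cite{FSS80} and used only as an input to Theorem~\ref{th-3-5}. So there is no ``paper's own proof'' to compare against.

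That said, your lower bound constructions are correct and standard. Your upper bound sketch is headed in the right direction---take a maximum red matching $M$, observe that $W=V\setminus V(M)$ is a blue clique of size at least $s+1$, and then try to extend an $s$-subset $S\subseteq W$ by $t$ common blue neighbours---but the gap you flag is real and not yet closed. The averaging inequality you allude to does not reduce cleanly to $|W|+\nu\ge s+t$: a type-(b) pair with common antenna $w$ contributes two potential blue neighbours when $w\notin S$ and zero when $w\in S$, so the expected count over a random $S$ is $(|W|-s)+a+2b(1-s/|W|)$ rather than $(|W|-s)+\nu$, and this shortfall cannot be absorbed by the hypothesis $N\ge m+s+t-1$ alone. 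To finish you need an extra structural step---either pick $S$ greedily to avoid the heaviest antennae, or argue via an augmenting-path/Gallai--Edmonds refinement that many type-(b) pairs cannot all point to vertices of $W$ without creating a larger red matching. The original paper \cite{FSS80} carries out essentially this refinement; your proposal stops just before it.
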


\begin{theorem}\label{th-3-5}
For any integers $m \geq 3$ and $t \geq s \geq 1$, we have 
$$
\max \left\{m, st, cs - \binom{m-1}{2}\right\} \leq \hat{r}_{\text{edge}}(mK_2^{\prec}, K_{s,t}^{\prec}) \leq \min \left\{mst, m^2 + m(s + t - 2) + (s-1)(t-1)\right\},
$$
where $c = t + m - 1$ if $t \geq m$ and $c = 2m - 2$ if $t \leq m$. Moreover, the bounds are sharp. 
\end{theorem}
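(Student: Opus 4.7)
The plan splits naturally into three tasks: establishing the two upper bounds, establishing the three lower bounds, and verifying sharpness. A useful preliminary observation is the algebraic identity
\[
m^2+m(s+t-2)+(s-1)(t-1) = (s+m-1)(t+m-1),
\]
which clarifies what the second upper bound encodes.

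For the upper bound $mst$, I apply Proposition~\ref{prop-basic-ineq}, inequality~(\ref{e3}), with $H^{\prec}=K_{s,t}^{\prec}$ and $|E(K_{s,t}^{\prec})|=st$, yielding the bound immediately (the witness being $m$ vertex-disjoint edge-ordered copies of $K_{s,t}^{\prec}$ chained by the inter-copy edge order). For the upper bound $(s+m-1)(t+m-1)$, I take $F^{\prec}=K_{s+m-1,\,t+m-1}^{\prec}$ with the vertex-lexicographical edge-order. In any $2$-coloring, if there is no red edge-ordered $mK_2^{\prec}$, then the red subgraph is bipartite with matching number at most $m-1$, and König's theorem produces a vertex cover $C$ with $|C|\le m-1$; writing $a=|C\cap A|$, $b=|C\cap B|$ with $a+b\le m-1$, the parts $A\setminus C$ and $B\setminus C$ have sizes at least $s$ and $t$ respectively, and selecting the first $s$ and first $t$ vertices of each in the natural order gives a blue complete bipartite subgraph whose induced edge-order coincides with the vertex-lexicographical order of $K_{s,t}^{\prec}$.

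For the lower bounds, the estimates $m$ and $st$ are immediate from the all-red and all-blue monochromatic colorings, which force $F^{\prec}$ to contain edge-ordered copies of $mK_2^{\prec}$ and $K_{s,t}^{\prec}$ respectively. For $cs-\binom{m-1}{2}$, I use a ``star-coloring'' strategy: given $X\subseteq V(F^{\prec})$ with $|X|=m-1$, color every edge incident to $X$ red and every remaining edge blue; the red matching has size at most $m-1$, so no red $mK_2^{\prec}$ arises, and the coloring succeeds in avoiding both monochromatic structures precisely when $F^{\prec}-X$ contains no $K_{s,t}^{\prec}$. The lower bound thus reduces to the extremal claim: if $|E(F^{\prec})|<cs-\binom{m-1}{2}$, there exists $X$ of size $m-1$ with $F^{\prec}-X$ avoiding $K_{s,t}^{\prec}$. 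I would prove this by a degree-sequence argument — take $X$ to consist of $m-1$ highest-degree vertices, use the identity $|E(F^{\prec}-X)|=|E(F^{\prec})|-\sum_{v\in X}\deg(v)+|E(F^{\prec}[X])|$ with $|E(F^{\prec}[X])|\le\binom{m-1}{2}$ to force $|E(F^{\prec}-X)|$ below the threshold required for $K_{s,t}^{\prec}$ — calibrated against Theorem~\ref{FSS80}. The piecewise definition of $c$ mirrors the case split in $r(mK_2,K_{s,t})=\max\{m+s+t-1,\,2m+s-1\}$: the branch $c=t+m-1$ governs when $t\ge m$ and $c=2m-2$ when $t\le m$.

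For sharpness, I would exhibit extremal edge-ordered graphs realizing each bound in the appropriate parameter regime: the graph $K_{s+m-1,t+m-1}^{\prec}$ itself witnesses the $(s+m-1)(t+m-1)$ upper bound when that quantity is dominant, and $m$ disjoint edge-ordered copies of $K_{s,t}^{\prec}$ witness the $mst$ bound otherwise, while a graph obtained by joining a small clique on $m-1$ auxiliary vertices to a $K_{s,t}^{\prec}$-core (with precisely $cs-\binom{m-1}{2}$ edges) together with the star-coloring above witnesses the lower bound. The most delicate step is the $cs-\binom{m-1}{2}$ lower bound, because one must reconcile the edge budget of $F^{\prec}$ with the Ramsey threshold from Theorem~\ref{FSS80} while preserving the edge-order constraint that rules out spurious unordered $K_{s,t}$ copies; the corresponding sharpness construction is equally intricate, as it requires an edge-ordering on the extremal graph that genuinely blocks every candidate blue edge-ordered $K_{s,t}^{\prec}$ embedding, and represents the combinatorial heart of the theorem.
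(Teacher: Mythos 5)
Your two upper bounds follow the paper's proof exactly: $mst$ comes from (\ref{e3}), and the bound $m^2+m(s+t-2)+(s-1)(t-1)=(s+m-1)(t+m-1)$ comes from coloring $K_{m+s-1,m+t-1}^{\prec}$ and applying K\"{o}nig's theorem to the red subgraph, with the blue $K_{s,t}^{\prec}$ inheriting the lexicographic order; the trivial lower bounds $m$ and $st$ are also fine. The genuine gap is in the main lower bound $cs-\binom{m-1}{2}$. Your star-coloring reduction needs the extremal claim that every graph $F^{\prec}$ with fewer than $cs-\binom{m-1}{2}$ edges admits a set $X$ of $m-1$ vertices whose deletion destroys \emph{all} copies of $K_{s,t}$, and the argument you sketch for it (delete the $m-1$ highest-degree vertices and push $|E(F^{\prec}-X)|$ ``below the threshold required for $K_{s,t}^{\prec}$'') does not go through. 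The only threshold expressible in terms of the edge count alone is $|E(F^{\prec}-X)|<st$, and deleting the top $m-1$ degrees need not remove $|E(F^{\prec})|-st$ edges: a copy of $K_{s,t}$ guarantees only $s$ vertices of degree at least $t$ (the other $t$ vertices may have degree as small as $s$), so when $m-1>s$ the sum $\sum_{v\in X}d_{F^{\prec}}(v)$ can be far too small and no contradiction results; more fundamentally, ``$F^{\prec}-X$ contains no $K_{s,t}$'' is a covering condition that is not controlled by the total edge count of $F^{\prec}-X$. The paper takes a different, and complete, route: it passes to a minimal Ramsey graph $G^{\prec}$, proves (by minimality) that $\delta(G^{\prec})\geq s$ and that $G^{\prec}$ contains $K_{s,t}^{\prec}$, uses Theorem \ref{FSS80} together with (\ref{e-1}) to get $|V(G^{\prec})|\geq\max\{m+s+t-1,\,2m+s-1\}$, and then counts edges: the $st$ edges of the embedded $K_{s,t}$ plus, since each of the remaining vertices has degree at least $s$, at least $s\cdot(\text{number of remaining vertices})-\binom{m-1}{2}$ further edges, which yields $cs-\binom{m-1}{2}$ in both ranges of $t$ versus $m$. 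You would need either to supply a correct proof of your extremal claim or to switch to this minimal-Ramsey-graph counting.

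The sharpness discussion is also not a proof as proposed. Exhibiting $K_{s+m-1,t+m-1}^{\prec}$ or $m$ disjoint copies of $K_{s,t}^{\prec}$ merely re-derives the upper bounds; sharpness requires parameter choices where $\hat{r}_{\text{edge}}(mK_2^{\prec},K_{s,t}^{\prec})$ is known to \emph{equal} the bound. The paper obtains this by citing the exact unordered values $\hat{r}(mK_2,K_{1,t})=mt$ and $\hat{r}(mK_2,K_{2,2})=4m$ from \cite{EF81} and transferring them to the edge-ordered setting via Proposition \ref{prop-1}, so that the upper bound $mst$ is attained for $K_{1,t}^{\prec}$ and $K_{2,2}^{\prec}$; your proposed extremal edge-ordering ``blocking every blue $K_{s,t}^{\prec}$'' for lower-bound sharpness is left entirely unconstructed.
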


\begin{proof}
We first consider the upper bound. If $m^2 + m(s + t - 2) + (s-1)(t-1) \geq mst$, then $\hat{r}_{\text{edge}}(mK_2^{\prec}, K_{s,t}^{\prec}) \leq mst$ by (\ref{e3}).

Suppose that $mst \geq m^2 + m(s + t - 2) + (s-1)(t-1)$. 
Let $F^{\prec} = K_{m+s-1, m+t-1}^{\prec}$ with partite sets $A$ (of size $m+s-1$) and $B$ (of size $m+t-1$). Then 
$$
|E(F^{\prec})| = (m+s-1)(m+t-1) = m^2 + m(s + t - 2) + (s-1)(t-1).
$$
It suffices to show that any red/blue coloring of $F^{\prec}$ contains either a red edge-ordered $mK_2^{\prec}$ or a blue edge-ordered $K_{s,t}^{\prec}$. Assume there is no red edge-ordered $mK_2^{\prec}$ and no blue edge-ordered $K_{s,t}^{\prec}$; we will derive a contradiction.

Since there is no blue edge-ordered $K_{s,t}^{\prec}$, $F^{\prec}$ contains at least one red edge. By K\"{o}nig's Theorem, in a bipartite graph, the size of a maximum matching equals the size of a minimum vertex cover. Without loss of generality, let $M$ be a maximum red matching of $F^{\prec}$ with $|M| \leq m-1$. Then there exists a minimum vertex cover $C$ of the red subgraph such that $|C| = |M| \leq m-1$. 
Let $A_1 = A \cap C$, $A_2 = A - A_1$, $B_1 = B \cap C$, and $B_2 = B - B_1$. Then $|A_1| + |B_1| = |C| \leq m-1$, so $|A_1| \leq m-1$ and $|B_1| \leq m-1$. 

All edges between $A_2$ and $B_2$ are blue. Otherwise, if there exists a red edge $xy$ with $x \in A_2$ and $y \in B_2$, then $xy$ would not be covered by $C$ (since $x \notin C$ and $y \notin C$), contradicting that $C$ is a vertex cover of the red subgraph. Thus, the subgraph induced by $A_2$ and $B_2$ is a blue edge-ordered complete bipartite graph $K_{|A_2|, |B_2|}^{\prec}$. 

Since $A_2 \subseteq A$ and $B_2 \subseteq B$ retain their original vertex labels, the lexicographic edge order of $K_{|A_2|, |B_2|}^{\prec}$ inherits from $K_{m+s-1, m+t-1}^{\prec}$. As $|A_1| \leq m-1$ and $|B_1| \leq m-1$, we have 
$$|A_2| = |A| - |A_1| = (m+s-1) - |A_1| \geq s$$ 
and 
$$|B_2| = |B| - |B_1| = (m+t-1) - |B_1| \geq t.$$
Thus, we obtain a blue edge-ordered $K_{s,t}^{\prec}$ with partite sets $A_2$ and $B_2$, a contradiction. Therefore, $\hat{r}_{\text{edge}}(mK_2^{\prec}, K_{s,t}^{\prec}) \leq m^2 + m(s + t - 2) + (s-1)(t-1)$.

We now consider the lower bound. If $\max \left\{m, st\right\} \geq cs - \binom{m-1}{2}$, then $\hat{r}_{\text{edge}}(mK_2^{\prec}, K_{s,t}^{\prec}) \geq \max \left\{m, st\right\}$ by (\ref{e3}). Suppose that $cs - \binom{m-1}{2} \geq \max \left\{m, st\right\}$. Let
$$
\mathcal{R}_{\text{edge}}(mK_2^{\prec}, K_{s,t}^{\prec}) = \{ G^{\prec} : G^{\prec} \to (mK_2^{\prec}, K_{s,t}^{\prec}) \text{ but } G'^{\prec} \nrightarrow (mK_2^{\prec}, K_{s,t}^{\prec}) \text{ for any } G'^{\prec} \varsubsetneq G^{\prec} \}.
$$
For the lower bound, let $G^{\prec} \in \mathcal{R}_{\text{edge}}(mK_2^{\prec}, K_{s,t}^{\prec})$ (a minimal edge-ordered Ramsey subgraph). It suffices to show that $|E(G^{\prec})| \geq cs - \binom{m-1}{2}$.

\begin{fact}\label{f1}
$\delta(G^{\prec}) \geq s$ and $G^{\prec}$ must contain $K_{s,t}^{\prec}$ as an edge-ordered subgraph.
\end{fact}

\begin{proof}
 Assume, for contradiction, that there exists a vertex $v \in V(G^{\prec})$ such that $d_{G^{\prec}}(v) \leq s-1$. Color all edges adjacent to $v$ blue. Since $G^{\prec} \to (mK_2^{\prec}, K_{s,t}^{\prec})$, it follows that $G^{\prec} - v \to (mK_2^{\prec}, K_{s,t}^{\prec})$, which contradicts the minimality of $G^{\prec}$. Moreover, $G^{\prec}$ must contain $K_{s,t}^{\prec}$ as an edge-ordered subgraph. Otherwise, coloring all edges of $G^{\prec}$ blue would contradict $G^{\prec} \to (mK_2^{\prec}, K_{s,t}^{\prec})$.   
\end{proof}

By Theorem \ref{FSS80} and (\ref{e-1}), we have $r_{\text{edge}}(mK_2^{\prec}, K_{s,t}^{\prec}) \geq r(mK_2, K_{s,t})=\max \{m + s + t - 1, 2m + s - 1\}$.
For $t \geq m$, we have $|V(G^{\prec})| \geq m + s + t - 1$. By Fact \ref{f1}, we have 
$$
|E(G^{\prec})| \geq st + s(m - 1) - \binom{m-1}{2} = s(t + m - 1) - \binom{m-1}{2}.
$$
For $m \geq t$, we have $|V(G^{\prec})| \geq 2m + s - 1$. Similarly, by Fact \ref{f1}, we have 
$$
|E(G^{\prec})| \geq st + s(2m - t - 1) - \binom{m-1}{2} = s(2m - 1) - \binom{m-1}{2}.
$$

Note that $\hat{r}(mK_2, K_{1,t}) = mt$ and $\hat{r}(mK_2, C_4) = 4m$ in \cite{EF81}. By Proposition \ref{prop-1}, we have $\hat{r}_{\text{edge}}(mK_2^{\prec}, K_{1,t}^{\prec}) = mt$ and $\hat{r}_{\text{edge}}(mK_2^{\prec}, K_{2,2}^{\prec}) = 4m$. This implies that the upper bound of Theorem \ref{th-3-5} is sharp. In particular, $\hat{r}_{\text{edge}}(mK_2^{\prec}, K_{1,2}^{\prec}) = 2m$. Thus, the bounds established in Theorem \ref{th-3-5} are sharp.
\end{proof}

\subsection{Complete bipartite graphs}
\indent
\par
Let $K_{1,n}$ be a star graph with center vertex $v$ and leaf vertices $v_1, v_2, \ldots, v_n$. We define the edge-ordered star graph $K_{1,n}^{\prec}$ by equipping $K_{1,n}$ with the edge order. This total order on the edge set $E(K_{1,n})$ is specified such that $(v, v_1) \prec (v, v_2) \prec \cdots \prec (v, v_n).$

The neighborhood $N_{G^{\prec}}(v)$ of a vertex $v \in V(G)$ is contained in $V(G^{\prec})$, and $d_{G^{\prec}}(v) = |N_{G^{\prec}}(v)|$. Similarly, $N^R_{G^{\prec}}(v)$ and $N^B_{G^{\prec}}(v)$ are the red and blue neighborhoods of $v \in V(G^{\prec})$, respectively, and $d_{G^{\prec}}^R(v)=|N_{G^{\prec}}^R(v)|$ and $d_{G^{\prec}}^B(v)=|N_{G^{\prec}}^B(v)|$.

\begin{theorem}\label{th-4-1}
For any positive integers $n,s,t$, we have $$\hat{r}_{\text{edge}}(K_{1,n}^{\prec}, K_{s,t}^{\prec}) \leq s^2 (n-1)+st.
$$
Moreover, the upper bound is sharp.  
\end{theorem}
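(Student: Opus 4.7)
The plan is to take as host graph $F^{\prec} := K^{\prec}_{s,\, s(n-1)+t}$, with partite sets $A = \{u_1,\dots,u_s\}$ and $B = \{v_1,\dots,v_{s(n-1)+t}\}$ equipped with the vertex lexicographic edge order. Then
\[
|E(F^{\prec})| = s\bigl(s(n-1)+t\bigr) = s^2(n-1) + st,
\]
hitting the target count exactly. I claim that any red/blue coloring of $F^{\prec}$ produces a red edge-ordered $K_{1,n}^{\prec}$ or a blue edge-ordered $K_{s,t}^{\prec}$. The key observation that makes this almost immediate is that every star with $n$ edges is automatically edge-order isomorphic to $K_{1,n}^{\prec}$ (simply relabel its leaves in the order of the incident edges); hence avoiding a red $K_{1,n}^{\prec}$ is equivalent to requiring $d_{F^{\prec}}^{R}(v) \leq n-1$ for every vertex $v$.

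Assuming this red-degree bound, each $u_i \in A$ has at most $n-1$ red neighbors in $B$, so $R := \bigcup_{i=1}^{s} N_{F^{\prec}}^{R}(u_i)$ satisfies $|R| \leq s(n-1)$, leaving $|B \setminus R| \geq t$. Let $v_{j_1} < v_{j_2} < \cdots < v_{j_t}$ be the $t$ smallest-indexed vertices of $B \setminus R$. Every edge from $A$ to $\{v_{j_1},\dots,v_{j_t}\}$ is blue, so these vertices together with $A$ induce a blue $K_{s,t}$; because $A$ and $\{v_{j_1},\dots,v_{j_t}\}$ retain their $F^{\prec}$-indices in increasing order, the inherited lex edge order matches that of $K_{s,t}^{\prec}$, yielding the desired blue edge-ordered copy.

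For sharpness, I would specialize to $s = 1$. The upper bound there reads $\hat{r}_{\text{edge}}(K_{1,n}^{\prec}, K_{1,t}^{\prec}) \leq n + t - 1$, while the classical identity $\hat{r}(K_{1,n}, K_{1,t}) = n + t - 1$ for the size-Ramsey number of a pair of stars, together with the inequality $\hat{r}(G,H) \leq \hat{r}_{\text{edge}}(G^{\prec}, H^{\prec})$ coming from (\ref{e1}), forces equality. The only subtlety I foresee is verifying that the chosen $t$ vertices of $B \setminus R$ genuinely inherit the vertex-lex edge order of $K_{s,t}^{\prec}$; this is immediate because we pick them in increasing index order. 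Beyond that and the star-relabeling observation, the argument is a clean pigeonhole step, with no delicate edge-ordering obstruction to manage.
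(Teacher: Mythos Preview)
Your proof is correct and follows essentially the same route as the paper: the host graph is $K_{s,\,s(n-1)+t}^{\prec}$, a degree bound from the absence of a red $K_{1,n}^{\prec}$ leaves at least $t$ common blue neighbors in $B$, and these together with $A$ yield the blue $K_{s,t}^{\prec}$; the sharpness at $s=1$ via $\hat{r}(K_{1,n},K_{1,t})=n+t-1$ and (\ref{e1}) is exactly the paper's argument as well. Your explicit remark that any $n$-edge star is automatically edge-order isomorphic to $K_{1,n}^{\prec}$, and your care in selecting the $t$ blue vertices in increasing index, make the edge-order verification slightly cleaner than the paper's phrasing, but the substance is identical.
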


\begin{proof}
Let $F^{\prec}$ be an edge-ordered complete bipartite graph with partition classes $A$ and $B$, where $|A| = s$ and $|B| = s(n-1)+t$, and let the vertices in $A$ be denoted by $\{v_1, \ldots, v_s\}$.  Assume that there is no red edge-ordered  $K_{1,n}^{\prec}$ for any red/blue edge-coloring of $F^{\prec}$, we will show that there is a blue edge-ordered $K_{s,t}^{\prec}$. Since there is no red edge-ordered  $K_{1,n}^{\prec}$, it follows that $d_{F^{\prec}}^R(v_i) \leq n-1$ for each $1 \leq i \leq s$.
Then $d_{F^{\prec}}^B(v_i) \geq |B|-(n-1)$ for each $1 \leq i \leq s$. Furthermore, we have 
$$
|N_{F^{\prec}}^B(v_1) \cap N_{F^{\prec}}^B(v_2) \cap \ldots \cap N_{F^{\prec}}^B(v_s)| \geq |B|-s(n-1)=t.
$$
These common neighbors form a blue edge-ordered $K_{s,t}^{\prec}$ with $\{v_1, \ldots, v_s\}$. Since $K_{s,t}^{\prec}$ is a subgraph of $F^{\prec}$, its edge order is necessarily preserved. This implies that we obtain the desired blue edge-ordered copy of $K_{s,t}^{\prec}$.

For positive integers $n$ and $t$, we have
$\hat{r}(K_{1,n}, K_{1,t})=n+t-1$, a result established in \cite{EFRS78}. By Theorem \ref{th-4-1} and (\ref{e1}), we have $\hat{r}_{\text{edge}}(K_{1,n}^{\prec}, K_{1,t}^{\prec})=n+t-1$. Faudree et al. \cite{FRS84} proved that $\hat{r}(K_{1,n}, K_{2,t})=4n+2t-4$ for $t \geq 9$ and $n$ is sufficiently large, and also showed that $\hat{r}(K_{1,n}, K_{2,2})=4n$ for $n \geq 3$.
Similarly, by Theorem \ref{th-4-1} and (\ref{e1}), we have $\hat{r}_{\text{edge}}(K_{1,n}^{\prec}, K_{2,t}^{\prec})=4n+2t-4$ for $t \geq 9$ and $n$ is sufficiently large, and $ \hat{r}_{\text{edge}}(K_{1,n}^{\prec}, K_{2,2}^{\prec})=4 n $ for $n \geq 3$.
\end{proof}

\begin{theorem}\label{th-4-2}
For any integers $t \geq s \geq 2$, we have $\hat{r}_{\text{edge}}(K_{s,t}^{\prec}) \leq e s^2 2^{s+2} t$.
\end{theorem}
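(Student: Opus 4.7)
The plan is to take $F^{\prec} = K_{2s^2,\,M}^{\prec}$, the edge-ordered complete bipartite graph equipped with the vertex-lexicographic edge order defined earlier, where $M = \lceil 2e \cdot 2^{s} \cdot t \rceil$; the total number of edges is then $2s^2 \cdot M$, which matches the claimed bound $e s^2 2^{s+2} t$ up to rounding. The crucial structural observation is that any $K_{s,t}$ subgraph of $F^{\prec}$ obtained by selecting an $s$-subset of the left part $A$ and a $t$-subset of the right part $B$ automatically inherits the vertex-lexicographic order on its edges, and is therefore edge-order isomorphic to $K_{s,t}^{\prec}$. Thus it suffices to establish the purely unordered Ramsey-type property: every 2-coloring of $E(F^{\prec})$ contains a monochromatic copy of the underlying $K_{s,t}$.

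Given any 2-coloring, I would call a right vertex $w \in B$ \emph{red-heavy} if $d^{R}(w) \geq s^2$, and \emph{blue-heavy} otherwise; since $d^{R}(w) + d^{B}(w) = 2s^2$, every $w$ is of one type. Without loss of generality at least $M/2$ vertices of $B$ are red-heavy; let $B'$ be this set. For each $w \in B'$, the number of $s$-subsets of $A$ contained in $N^{R}(w)$ is $\binom{d^{R}(w)}{s} \geq \binom{s^2}{s}$. Double counting pairs $(A', w)$ with $w \in B'$, $A' \subseteq N^{R}(w) \cap A$, and $|A'| = s$, the total count is at least $(M/2)\binom{s^2}{s}$. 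Averaging over the $\binom{2s^2}{s}$ possible choices of $A'$, some fixed $A'$ must lie in the red neighborhoods of at least $(M/2)\binom{s^2}{s}/\binom{2s^2}{s}$ vertices of $B'$, and $A'$ together with any $t$ of these common neighbors forms a red $K_{s,t}$ provided this quantity is at least $t$.

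The main technical ingredient, which I expect to be the principal obstacle, is the binomial ratio estimate $\binom{2s^2}{s}/\binom{s^2}{s} \leq e \cdot 2^s$. I would derive it by writing
\[
\frac{\binom{2s^2}{s}}{\binom{s^2}{s}} \;=\; \prod_{i=0}^{s-1}\frac{2s^2 - i}{s^2 - i} \;=\; 2^s \prod_{i=0}^{s-1}\frac{1 - i/(2s^2)}{1 - i/s^2} \;\leq\; 2^s \prod_{i=0}^{s-1}\frac{1}{1 - i/s^2},
\]
and then applying the elementary inequality $(1-x)^{-1} \leq e^{2x}$ valid for $x \in [0, 1/2]$; since $i/s^2 \leq (s-1)/s^2 \leq 1/2$ for $s \geq 2$, the remaining product is bounded by $\exp\bigl(2 \sum_{i=0}^{s-1} i/s^2\bigr) = \exp((s-1)/s) \leq e$. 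Combining with $M \geq 2e \cdot 2^s t$ yields at least $t$ common red neighbors of the chosen $A'$, giving a red $K_{s,t}$ whose inherited vertex-lexicographic edge order makes it a monochromatic edge-ordered copy of $K_{s,t}^{\prec}$, as desired.
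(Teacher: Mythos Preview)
Your proposal is correct and follows essentially the same approach as the paper's own proof: the host graph $K_{2s^2,\,e2^{s+1}t}^{\prec}$, the majority-color partition of the right side, the double-count/pigeonhole on $s$-subsets of $A$ lying in a red neighborhood, and the binomial ratio estimate via $1-x \ge e^{-2x}$ on $[0,1/2]$ are all identical in substance to what the paper does. The only cosmetic differences are that the paper phrases the count as ``red stars $K_{1,s}$'' rather than pairs $(A',w)$, and writes the ratio $\binom{s^2}{s}/\binom{2s^2}{s}$ from below rather than your reciprocal from above; the integrality issue you flag (``up to rounding'') is likewise glossed over in the paper.
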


\begin{proof}
Let $F^{\prec}$ be a edge-ordered complete bipartite graph with partition classes $A$ and $B$, where $|A| = 2s^2$ and $|B| = e 2^{s+1} t$.  Then $|E(F^{\prec})|= e s^2 2^{s+2} t$. It suffices to show that any red/blue coloring of $F^{\prec}$ contains either a red edge-ordered $K_{s,t}^{\prec}$ or a blue edge-ordered $K_{s,t}^{\prec}$.
We begin by splitting the vertex set $B$ into two disjoint subsets. A vertex $v \in B$ is designated \textit{red} whenever at least half of its incident edges are red, and designated \textit{blue} in all other instances.

Since $|B| = e 2^{s+1} t$, by the pigeonhole principle, at least one class (red or blue) contains at least $e2^s t$ vertices. Without loss of generality, assume that there are at least $e 2^s t$ red vertices, denote these vertices by $B_1$. Each vertex $v \in B_1$ has degree $2s^2$ in $A$. By the definition of a red vertex, at least $s^2$ of the edges incident to $v$ are red. It follows that every vertex $v$ gives rise to at least $\binom{s^2}{s}$ distinct red stars $K_{1,s}^{\prec}$. The total number of red stars is at least 
$$
|B_1|\cdot \binom{s^2}{s} \geq e 2^s t \binom{s^2}{s}.
$$

There are $\binom{2s^2}{s}$ subsets of size $s$ (leaf sets) in $A$.
Each red star corresponds to a leaf set. Thus, for $s \geq 2$, the average number of occurrences per leaf set is at least
$$
\begin{aligned}
\frac{e 2^s t\binom{s^2}{s}}{\binom{2s^2}{s}} 
&=e 2^s t \prod_{i=0}^{s-1} \frac{s^2 - i}{2s^2 - i} \\
&\geq e 2^s t \prod_{i=0}^{s-1} \frac{1}{2}\left(1 - \frac{i}{s^2}\right) 
= e t \prod_{i=1}^{s-1} \left(1 - \frac{i}{s^2}\right)\\
& \geq e t\prod_{i=1}^{s-1} e^{-2i/s^2} = et \exp\left(\frac{-2}{s^2}\sum_{i=1}^{s-1} i \right)=et \exp\left(-\frac{s-1}{s} \right)\\
& \geq e t e^{-1}=t,
\end{aligned}
$$
where the second inequality use $1-x \geq e^{-2x}$ for $0 \leq x \leq \frac{1}{2}$, since $0 \leq \frac{i}{s^2} \leq \frac{s-1}{s^2} < \frac{1}{2}$ for $1 \leq i \leq s-1$ and $s \geq 2$. Therefore, there exists a leaf set (an $s$-subset of $A$) that appears in at least $t$ red stars. The centers of these stars (from $B$) and the leaf set form a red $K_{s,t}^{\prec}$. Since $K_{s,t}^{\prec}$ is a subgraph of $F^{\prec}$, it follows that its edge order is still preserved, that is, we have a red edge-ordered $K_{s,t}^{\prec}$, as desired.
\end{proof}

Pikhurko \cite{Pi02} proved that $\hat{r}(K_{s,t})=\Theta\bigl(s^{2} 2^{s} t\bigr)$ for fixed $s$ and sufficiently large $t$. Therefore, by Theorem \ref{th-4-2} and (\ref{e1}), we have  
the following corollary:
\begin{corollary}
$\hat{r}_{\text{edge}}(K_{s,t}^{\prec}) =\Theta\bigl(s^{2} 2^{s} t\bigr)$  for fixed $s$ and sufficiently large $t$.    
\end{corollary}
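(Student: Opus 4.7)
The plan is to construct a single edge-ordered complete bipartite host graph $F^{\prec}$ and verify directly that any $2$-coloring of its edges produces a monochromatic edge-ordered copy of $K_{s,t}^{\prec}$. Specifically, I would take $F^{\prec}$ to be the edge-ordered complete bipartite graph with partition classes $A$ and $B$, where $|A|=2s^{2}$ and $|B|=e\,2^{s+1}t$, equipped with the vertex-lexicographic edge ordering. Then $|E(F^{\prec})|=2s^{2}\cdot e\,2^{s+1}t=e\,s^{2}\,2^{s+2}\,t$, matching the claimed bound. The crucial structural observation (which I would explicitly invoke) is that because the edge order of $F^{\prec}$ is lexicographic on $A\times B$, any complete bipartite subgraph of $F^{\prec}$ with $s$ vertices in $A$ and $t$ vertices in $B$ automatically inherits the vertex-lexicographic order, and so is edge-order isomorphic to $K_{s,t}^{\prec}$. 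This reduces the whole problem to finding a monochromatic \emph{unordered} $K_{s,t}$ inside $F^{\prec}$.

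For the counting, I would call a vertex $v\in B$ \emph{red} if at least half of its $2s^{2}$ incident edges are red, and \emph{blue} otherwise. Pigeonhole on $|B|=e\,2^{s+1}t$ yields a monochromatic class, say the red class $B_{1}\subseteq B$, of size at least $e\,2^{s}\,t$. Each vertex $v\in B_{1}$ has at least $s^{2}$ red neighbors in $A$, hence contributes at least $\binom{s^{2}}{s}$ red stars $K_{1,s}$ with centre $v$ and leaf-set an $s$-subset of $A$. Double-counting these stars by their leaf-sets shows that some $s$-subset $S\subseteq A$ arises as the leaf-set of at least
\[
\frac{|B_{1}|\,\binom{s^{2}}{s}}{\binom{2s^{2}}{s}}\;\geq\;e\,2^{s}t\,\prod_{i=0}^{s-1}\frac{s^{2}-i}{2s^{2}-i}
\]
red stars. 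Taking the corresponding $t$ centres in $B_{1}$ and the leaves $S$ produces a red $K_{s,t}$, which by the structural observation above is the desired red edge-ordered copy of $K_{s,t}^{\prec}$.

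The main obstacle is showing that the right-hand side of the averaging inequality is at least $t$. I would handle this by the same chain used in the book-graph estimate: bound each ratio by $\frac{s^{2}-i}{2s^{2}-i}\geq \tfrac{1}{2}\bigl(1-\tfrac{i}{s^{2}}\bigr)$, extract the factor $2^{-s}$, and then use $1-x\geq e^{-2x}$ on $[0,\tfrac{1}{2}]$ (valid since $i/s^{2}\leq (s-1)/s^{2}<1/2$ for $i\leq s-1$, $s\geq 2$) to obtain
\[
\prod_{i=1}^{s-1}\Bigl(1-\tfrac{i}{s^{2}}\Bigr)\;\geq\;\exp\!\Bigl(-\tfrac{2}{s^{2}}\sum_{i=1}^{s-1}i\Bigr)=\exp\!\Bigl(-\tfrac{s-1}{s}\Bigr)\;\geq\;e^{-1}.
\]
Chaining these bounds collapses the expression to $e\,t\cdot e^{-1}=t$, completing the estimate.

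Finally, I would close by remarking that the blue case is symmetric, so in either case the $2$-coloring of $F^{\prec}$ contains a monochromatic edge-ordered $K_{s,t}^{\prec}$, yielding $\hat{r}_{\text{edge}}(K_{s,t}^{\prec})\leq |E(F^{\prec})|=e\,s^{2}\,2^{s+2}\,t$.
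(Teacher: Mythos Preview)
Your upper-bound argument is correct and matches the paper's proof of Theorem~\ref{th-4-2} essentially line for line: same host graph $F^{\prec}=K_{2s^{2},\,e2^{s+1}t}^{\prec}$, same red/blue classification of $B$, same star-counting and averaging, and the same inequality chain to reach $t$.

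However, the corollary asserts $\Theta(s^{2}2^{s}t)$, not merely $O(s^{2}2^{s}t)$, and your proposal addresses only the upper bound. You never establish, or even mention, the matching lower bound $\hat{r}_{\text{edge}}(K_{s,t}^{\prec})=\Omega(s^{2}2^{s}t)$. Without it the $\Theta$ claim is unproved. In the paper this half is handled in one line: Pikhurko's result gives $\hat{r}(K_{s,t})=\Theta(s^{2}2^{s}t)$ for fixed $s$ and large $t$, and the trivial comparison $\hat{r}(K_{s,t})\leq \hat{r}_{\text{edge}}(K_{s,t}^{\prec})$ (inequality~(\ref{e1})) transfers the lower bound to the edge-ordered setting. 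You should add this sentence; as written, your proof is incomplete.
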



\begin{thebibliography}{9}


\bibitem{BCKK15}
M. Balko, J. Cibulka, K. Kr\'{a}l, J. Kyn\v{c}l, Ramsey numbers of
ordered graphs, \emph{Electron. Notes Discrete Math.} 49 (2015), 419--424.

\bibitem{BCKK20}
M. Balko, J. Cibulka, K. Kr\'{a}l, J. Kyn\v{c}l, Ramsey numbers of ordered graphs, \emph{Electron. J. Combin.} 27(1) (2020) 32, Paper 1.16.


\bibitem{BJV19}
M. Balko, V. Jel\'{i}nek, P. Valtr, On ordered ramsey numbers of
bounded-degree graphs, \emph{J. Combin.
Theory, Ser. B} 134 (2019), 179--202.

\bibitem{BP23}
M. Balko, M. Poljak, On off-diagonal ordered Ramsey numbers of nested matchings,
\emph{Discrete Math.} 346 (2) (2023), 113223.


\bibitem{BV19}
M. Balko, M. Vizer, Edge-ordered Ramsey numbers, \textit{Acta Math. Univ. Comenian.} 88 (3) (2019), 409–414.


\bibitem{BV20}
M. Balko, M. Vizer,
Edge-ordered Ramsey numbers,
\textit{European J. Combin.}
87 (2020), 103100.


\bibitem{BV22}
M. Balko, M. Vizer, On ordered Ramsey numbers of tripartite 3-uniform hypergraphs,
\emph{SIAM J. Discrete Math.} 36(1) (2022), 214--228.


\bibitem{Be83}
J. Beck, On size Ramsey number of paths, trees, and circuits. I,
\emph{J. Graph Theory} 7 (1983), 115--129.


\bibitem{BKMMMMP21}
S. Berger, Y. Kohayakawa, G. S. Maesaka, T. Martins, W. Mendonca, G. O. Mota, O. Parczyk, The size-Ramsey number of powers of bounded degree trees,
\emph{J. Lond. Math. Soc.} 103 (4) (2021), 1314--1332.

\bibitem{BM08}
J.A. Bondy, U.S.R. Murty, Graph Theory, Springer, 2008.

\bibitem{CP02}
S. A. Choudum, B. Ponnusamy, Ordered Ramsey numbers, \emph{Discrete Math.} 247(1–3) (2002), 79--92.

\bibitem{CJKMMRR19}
D. Clemens, M. Jenssen, Y. Kohayakawa, N. Morrison, G. O. Mota, D. Reding, B. Roberts, The size-Ramsey number of powers of paths,
\emph{J. Graph Theory} 91 (3) (2019), 290--299.


\bibitem{CFLS17}
D. Conlon, J. Fox, C. Lee, B. Sudakov, Ordered Ramsey numbers, \emph{J. Combin. Theory, Ser. B} 122 (2017), 353--383.


\bibitem{CFW22}
D. Conlon, J. Fox, Y. Wigderson, Ramsey numbers of books and quasirandomness. \emph{Combinatorica}
42 (2022), 309–-363. 


\bibitem{CFW23}
D. Conlon, J. Fox, Y. Wigderson, Three early problems on size Ramsey numbers,
\emph{Combinatorica} 43 (3) (2023), 743--768.


\bibitem{EFRS78}
P. Erd\H{o}s, R. J. Faudree, C. C. Rousseau, R. H. Schelp, The size Ramsey numbers. \textit{Period. Math. Hungar.} 9(1) (1978), 145--161.


\bibitem{EF81}
P. Erd\H{o}s, R. J. Faudree, Size Ramsey numbers involving matchings, \textit{Finite and Infinite Sets} 1 (1981), 247--264.


\bibitem{FSS80} 
R. J. Faudree, R. H. Schelp, J. Sheehan, Ramsey numbers for matchings, \textit{Discrete Math.} 32 (1980), 105--123.


\bibitem{FRS84}
R. J. Faudree, C. C. Rousseau, J. Sheehan, A class of size Ramsey problems involving stars, Graph Theory and Combinatorics,(Cambridge, 1983, B. Bollob\'{a}s ed.), (1984), 273–-281.


\bibitem{FL20}
J. Fox, R. Li, On edge-ordered Ramsey numbers, {\it Rand. Struct. Algor.} 57 (4) (2020),
1174--1204.


\bibitem{GRS90} 
R. L. Graham, B. L. Rothschild and J. H. Spencer, Ramsey theory, 2nd edition, Wiley, 1990.


\bibitem{HJKMR20}
J. Han, M. Jenssen, Y. Kohayakawa, G. O. Mota, B. Roberts, The multicolour size-Ramsey number of powers of paths,
\emph{J. Comb. Theory, Ser. B} 145 (2020), 359--375.


\bibitem{HKTL95}
P. E. Haxell, Y. Kohayakawa, T. Łuczak, The induced size-Ramsey number of cycles,
\emph{Comb. Probab. Comput.} 4 (3) (1995), 217--239.


\bibitem{KLWY21}
N. Kam\u{c}ev, A. Liebenau, D. R. Wood, L. Yepremyan, The size Ramsey number of graphs with bounded treewidth,
\emph{SIAM J. Discrete Math.} 35 (1) (2021), 281--293.


\bibitem{KRSS11}
Y. Kohayakawa, V. R\"{o}dl, M. Schacht, E. Szemerédi, Sparse partition universal graphs for graphs of bounded degree,
\emph{Adv. Math.} 226 (6) (2011), 5041--5065.



\bibitem{RS00}
V. R\"{o}dl, E. Szemer\'{e}di, On size Ramsey numbers of graphs with bounded degree,
\emph{Combinatorica} 20 (2) (2000), 257--262.


\bibitem{Ro19}
D. Rohatgi, Off-diagonal ordered Ramsey numbers of matchings,
\emph{Electron. J. Combin.} 26 (2) (2019), Paper No. 2.21.


\bibitem{Ti24}
K. Tikhomirov, On bounded degree graphs with large size-Ramsey numbers,
\emph{Combinatorica} 44 (1) (2024), 9--14.

\bibitem{Pi02}
O. Pikhurko, Asymptotic size Ramsey results for bipartite graphs, \emph{SIAM J. Discrete Math.} 16 (2002), 99-113.

\end{thebibliography}
\end{document}